\documentclass[a4paper]{article}

%accents
\usepackage[dvips]{graphics}
\usepackage[dvips]{graphicx}
\usepackage{amsfonts}
\usepackage{amssymb}
\usepackage{amsmath}
\usepackage{amstext}
\usepackage{amsbsy}
\usepackage{amsopn}
\usepackage{amsthm}
\usepackage{color}
\usepackage{amscd}
\usepackage{enumerate}
\usepackage{amsxtra}
\usepackage{mathrsfs}
\usepackage{upref}
\usepackage[colorlinks,
linkcolor=red,
anchorcolor=red,
citecolor=red
]{hyperref}
\usepackage{lineno}
%\linenumbers
\usepackage{geometry}
\geometry{left=3.5cm,right=3.5cm,top=3cm,bottom=3cm}

\newcommand{\A}{\mathcal{A}}
\newcommand{\EE}{\mathbb{E}}
\newcommand{\RR}{\mathbb{R}}
\newcommand{\PP}{\mathbb{P}}
\newcommand{\B}{\mathcal{B}}
\newcommand{\mC}{\mathcal{C}}

\newcommand{\wh}[1]{\widehat{#1}}

\newcommand{\nel}{\left\|}
\newcommand{\ner}{\right\|}

\newcommand{\norm}[1]{\lVert#1\rVert}

\newtheorem{thm}{Theorem}[section]

\newtheorem{proposition}{Proposition}[section]
\newtheorem{lem}{Lemma}[section]

\theoremstyle{definition}
\newtheorem{defn}{Definition}[section]
\theoremstyle{remark}

\begin{document}
\title{Learning interacting particle systems: \\diffusion parameter
	estimation for aggregation equations}

\author{Hui Huang\thanks{Department of Mathematics, Simon Fraser University, Burnaby, BC, Canada. Email: hha101@sfu.ca}, 
	Jian-Guo Liu\thanks{Department of Mathematics and Department of Physics,
		Duke University, Durham, NC, USA. Email: jliu@phy.duke.edu}, 
	Jianfeng Lu\thanks{Department of Mathematics, Department of Physics and
		Department of Chemistry, Duke University, Durham, NC,
USA. Email: jianfeng@math.duke.edu}}
\maketitle
\begin{abstract}
	In this article, we study the parameter estimation of interacting particle systems subject to the Newtonian aggregation and Brownian diffusion. Specifically, we construct an estimator $\widehat{\nu}$ with partial observed data to approximate the diffusion parameter $\nu$, and the estimation error is achieved. Furthermore, we extend this result to general aggregation equations with a bounded Lipschitz interaction field.
\end{abstract}	
{\small {\bf Keywords:}
	Inverse problem, parameter identification of agent based model, mean-field limit, data assimilation, concentration inequality, discrete observation.}
\section{Introduction}
Parameter estimation of (stochastic) dynamical systems is an
  exciting area of research with ubiquitous applications in many areas
  in science and technology, where it usually requires  incorporating data into a model. This is often known as data assimilation
  (see the recent book \cite{LawStuartZygalakis:15} for a mathematical
  introduction) in particular in the context of numerical weather
  forecast. It is also known as system identification in the control
  literature (see for example \cite[Chapters 11 and
  12]{khalil2004modeling} for applications for modeling robots). In
  such problems, a physical model of the form of a dynamical system is
  derived from (partial) empirical observations and is usually
  calibrated with and improved by experimental data. The problem is
  also related to uncertainty quantification, which is important as
  % the dynamics are unavoidably affected by uncertainties in some of
  % their relevant parameters.  Therefore, it is of paramount importance
  % to quantify the uncertainties on the dynamics in the presence of
  % data, because
  it enables building of more realistic models and making better
  predictions of their behavior in the future. 
   In the modeling of
  self-organized systems, different ways to qualify uncertainties have
  been studied (see for example \cite{albi2015uncertainty, Fornasier,
    dimarco2017uncertainty, during2009boltzmann, katz2011inferring,
    tosin2017boltzmann}). 
%In such problems, a physical model of the form of a dynamical system is
%proposed with a few unknown (uncertain) parameters, (partial) observations of the
%evolution of the system are then used to estimate those parameters, which can be used to give predictions of the dynamics in the future or to design control strategy for the dynamics.

In this work, we are interested in the parameter estimation
  problems arising from a particular class of physical systems that
  can be modeled by interacting particle systems. This means that the
  dynamics of the system is determined by interactions between agents
  (particles) together with some intrinsic or extrinsic random
  effects. Such systems are widely used to establish different
  mathematical models describing collective behaviors of organisms and
  social aggregations, for instance flocks of birds
  \cite{hildenbrandt2010self}, aggregation of bacteria
  \cite{bellomo2015toward}, schools of fish \cite{hemelrijk2008self},
  swarms formed by insects \cite{bernoff2013nonlocal}, opinion
  dynamics \cite{motsch2014heterophilious} and robotics and space
  missions \cite{ji2007distributed}.  Various types of diffusion are
  considered in these models: While linear diffusion is more commonly
  used \cite{dolbeault2004optimal}, the diffusion can be slow in areas
  with few particles, known as the degenerate (slow) diffusion model
  \cite{topaz2006nonlocal}; and similarly, the diffusion can also be
  fast \cite{sugiyama2011extinction}. One may also consider the
  nonlocal diffusion, where organisms adopt L\'{e}vy process search
  strategies which have continuous paths interspersed with random
  jumps \cite{huang2016well}. Thus qualifying the type of the
  diffusion can significantly reduce the uncertainty in model
  predictions and is hence a very important step in many
  applications. Our present paper focuses on the case of Brownian
  diffusion with unknown diffusion parameter. We study the diffusion
  parameter estimation of such interacting particle systems with
  partial observed data.  
% \rev{Parameter estimation naturally arises from all these scenario
% when the mathematical model is calibrated with and improved by
% experimental observations, see for example .}

More precisely, the microscopic agent-based model investigated here describes the evolution of positions
of $N$ agents, denoted by $\{X_i^t\} \subset \RR^d$,
$i = 1, \ldots, N$, whose evolution is governed by a system of
stochastic differential equations (SDEs) of the type
\begin{equation}\label{eq:agent}
d X_i^t= \frac{1}{N-1} \sum_{j\neq i}^N F(X_i^t- X_j^t) d t + \sqrt{2 \nu} d B_i^t,\quad i=1,\cdots,N,
\end{equation}
where $F$ models some pairwise interaction between the agents and
$B_i^t$ are independent realizations of Brownian motions which count
for extrinsic random perturbation of the agent positions. In such
systems, the agents are assumed to be identical, so that the noise
level $\nu$ is the same for each agent. In this work, we assume that
the interaction kernel $F$ between agents is known, while the noise
level $\nu$ is to be determined. More specifically, we will focus on
the case when the interaction between agents is given by Newtonian
type interaction for dimension $d\geq2$, or more precisely, a
regularized Newtonian interaction, to be specified below.  Suppose we
observe or track the trajectories of $K$ agents on the time interval
$[0, T]$, where $1\ll K \ll N$, the question we address in this work
is \emph{how to estimate $\nu$ and to quantify the error of the
	estimator}.

A more general situation one may consider is the problem for which the
interaction kernel is also to be determined, this will be left for
future works. In \cite{fetecau2011swarm}, authors solved the following inverse
problem for aggregation equations: given a  equilibrium
state, they constructed a corresponding force $F$ to ensure that equilibrium.
We also note the recent work \cite{Fornasier} which considers
learning the interaction kernel for a deterministic interacting
particle systems through a variational approach.  While
	admittedly that we have taken a simple scenario and a somewhat
	simplistic model for interacting agents, already many interesting
	issues arise from both mathematical and application point of
	view. For instance, how accurate one can make the estimation by only
	observing / tracking a few agents. How the potential
	singularities of the interacting potential (such as Coulomb or
	Newtonian type) impact the estimation accuracy.

Observe that the scaling of \eqref{eq:agent} is chosen such that we
are in the mean-field regime, as the interaction strength decreases as
$1/N$ as the number of agents $N \to \infty$. It is thus expected that
in the limit $N \to \infty$, the system can be well described by a
mean-field dynamics, which can be described as the following 
nonlinear partial differential equation (PDE)
\begin{subequations}\label{KS}
	\begin{align}
	&\partial_t\rho=\nu\Delta\rho-\nabla\cdot(\rho F\ast \rho),\quad x\in\mathbb{R}^d,~t>0,\\
	&\rho(x,0)=\rho_0(x), 
	\end{align}
\end{subequations}
where the noise level $\nu > 0$ enters the PDE system as a diffusion
parameter. In particular, here the interaction kernel is chosen as Newtonian: 
\begin{equation}\label{F}
F(x)=\mp\frac{C_*x}{|x|^d},\quad \forall~ x\in\mathbb{R}^d\backslash\{0\}, ~d\geq2,
\end{equation} with $C_*=\frac{\Gamma(d/2)}{2\pi^{d/2}}$. Here the sign $\mp$ indicates that the interaction between individuals can either be attraction or repulsion. Specifically, when the mechanism of interaction is attraction, the mean field equation \eqref{KS} becomes the parabolic-elliptic Keller-Segel  equation \cite{keller1970initiation,PCS}, which
is a prototypical model for chemotaxis and has been used in many
related modeling scenarios. The analysis of 
the scaling limit of interacting particle system \eqref{eq:agent} is usually called the \textit{mean-field limit}, which pass  limits from microscopic discrete particle systems to macroscopic continuum models.

While it would be intriguing to study the parameter identification
problem for the particle system \eqref{eq:agent} with the Newtonian
interaction \eqref{F}, such microscopic system is however ill-posed,
as shown by the recent deep result by Fournier and Jourdain
\cite[Proposition 4]{Fournier}: For any $N\geq 2$ and $T>0$, denote
$\{X_i(t); t\in [0,T], i = 1, \ldots, N\}$ the solution to
\eqref{eq:agent} with $F$ given in \eqref{F}, then
\begin{equation*}
\PP\left(\exists s\in[0,T], \exists 1\leq i<j\leq N: X_i(s)=X_j(s)\right)>0,
\end{equation*}
i.e., the singularity cannot be avoided in any finite time with a
positive probability and thus the particle system is not
well-defined. 

Classical results of the mean-field limit requires the kernel $F\in W^{1,\infty}(\RR^d)$. One possible way to overcome singularity is to regularize the kernel $F$. In particular, in this work  we consider the regularized kernel $F^N$:
\begin{equation}\label{kernel}
F^N=F\ast\psi_N,\quad
\psi_N(x)=N^{d\delta}\psi(N^\delta x), 
\end{equation}
where $\delta$ the cut-off index and $0 \leq \psi(x) \in C_0^{\infty}(\mathbb{R}^d)$ is a cut-off
function, which satisfies $\psi(x)=\psi(|x|)$ and
$\int_{\mathbb{R}^d}\psi(x)\,dx=1$.  
Then we have the regularized stochastic particle system $\{X_i^t\}_{i=1}^{N}$ satisfying 
\begin{equation}\label{particl system}
dX_{i}^t=\frac{1}{N-1}\sum_{j\neq i}^{N}F^N\big(X_{i}^t-X_{j}^t\big)\,dt+\sqrt{2\nu}\,dB_i^t,\quad i=1,\cdots,N,
\end{equation}
where the initial data $\{X_i^0\}_{i=1}^N$ are independently  identically distributed (i.i.d.) with the common density function $\rho_0$. Since the regularized kernel is Lipschitz for any fixed $N$, the system above has a unique global strong solution. The corresponding aggregation equation has the form
\begin{subequations}\label{RKS}
	\begin{align}
	&\partial_t\rho=\nu\Delta\rho-\nabla\cdot(\rho F^N\ast \rho),\quad x\in\mathbb{R}^d,~t>0,\\
	&\rho(x,0)=\rho_0(x).
	\end{align}
\end{subequations}

 Classical results for mean-field limit with globally Lipschitz forces was obtained by Braun and Hepp \cite{braun1977vlasov} and Dobrushin \cite{dobrushin1979vlasov}.  Then Bolley, Ca\~{n}izo and Carrilo \cite{bolley2011stochastic} presented an extension of the classical theory to the particle system with only locally Lipschitz interacting force. 
The last few years have seen great progress in  mean-field limits for singular forces by treating them with an $N$-dependent cut-off. In particular, 
the mean-field limit for the Keller-Segel model has been rigorously proved in \cite{fetecau2018propagation,Fournier,garcia2017,HH2,HH1}. 
And the deterministic particle method for aggregation equations can be found in \cite{carrillo2017blob,craig2016blob}.
For a general overview of this topic we refer readers to \cite{carrillo2010particle,golse2016dynamics,jabin2014review,jabin2017mean,spohn2004dynamics}.

Considering the parameter estimation problem for diffusion processes,
there is a huge literature in statistics and econometrics, often
related to the estimation of volatility in financial models. A
complete literature review is beyond our scope and we refer the
readers to the book \cite{Rao:2010} for an overview. To make the scenario more
realistic, instead of assuming the availability of some trajectories
$\{X_i^t\}_{i=1}^{N}$ for all time $t \in [0, T]$, we consider the
case that trajectories are only observed at discrete time snapshots
during the time interval. Diffusion parameter estimation problems based on discrete observations have been discussed by many authors \cite{ait2004estimators,bandi2017functional,bibby1995martingale,dacunha1986estimation,dohnal1987estimating,fan,huzak2016approximate,kessler1997estimation,yoshida1992estimation}. However, to our knowledge, no previous work has been done for diffusion estimation in the
context of interacting particle systems. Specifically, there are a few differences between
our work with these works: 1) We consider parameter estimation of an
interacting particle system, however authors mentioned above studied a
single diffusion process. 2) Our estimator
\eqref{estimator1} concerns the information of interacting particles, but they only
investigated one trajectory and take the expectation value of this stochastic process.  3) In our setting, the interacting force $F$ is singular, while
the drift function is assumed to be regular enough in usual
statistics literature as mentioned earlier.
Our main result, given below in
Theorem~\ref{mainthm} after we make precise the estimator $\widehat{\nu}$,
quantifies the estimation error of the proposed estimator.

Take a time step $\Delta t>0$ and let $t_n:=n\Delta t$
and $M:=\frac{T}{\Delta t}$ (we assume that $\frac{T}{\Delta t}$ is an
integer). Denote $X_i^{(n)}:=X_i^{t_n}$ as the solution to \eqref{particl system} at time $t_n$.  Namely, one has
\begin{align}
X_i^{(n+1)}-X_i^{(n)}&=\int_{t_n}^{t_{n+1}}\frac{1}{N-1}\sum_{j\neq i}^{N}F^N\big(X_{i}^s-X_{j}^s\big)\,ds+\sqrt{2\nu }(B_i^{t_{n+1}}-B_i^{t_n})\notag \\
&= \int_{t_n}^{t_{n+1}}\frac{1}{N-1}\sum_{j\neq i}^{N}F^N\big(X_{i}^s-X_{j}^s\big)\,ds+\sqrt{2\nu \Delta t}\mathcal{N}_i^{(n)},
\end{align}
where $\mathcal{N}_i^{(n)}\sim \mathcal{N}(0,1)^d$, i.e.  the standard Gaussian distribution in dimension $d$.

Then 
we are ready to define our estimator for the diffusion parameter
\begin{equation}\label{estimator1}
\widehat\nu:=\frac{1}{2dKT} \sum_{i=1}^K\sum_{n=0}^{M-1} \left | X_{i}^{(n+1)}-X_{i}^{(n)}\right|^2,
\end{equation}
where $1\ll K \ll N$, which means we only have partial observations.

Our main result quantifies the estimation error of the proposed estimator \eqref{estimator1}, which is summarized as below 
\begin{thm}\label{mainthm}
	Suppose the initial data $0\leq\rho_0(x)\in L^1\cap L^\infty(\mathbb{R}^d)$ and let
	$\rho(x,t)$ be the regular solution of the aggregation equation \eqref{KS}
	up to the time $T$ such that $\rho\in L^\infty(0,T;L^1\cap L^\infty(\mathbb{R}^d))$.  Take a time step $\Delta t>0$ and
	let $t_n:=n\Delta t$ and $M:=\frac{T}{\Delta t}$. Assume
	$\{X_i^{(n)}\}_{i=1,n=0}^{K,M}$ be the $K$ $(1\ll K\ll N)$ sample
	trajectories satisfying \eqref{particl system} with the cut-off
	index $0<\delta<\frac{1}{d}$ at time $t_n$. For any $\alpha>0$,
	there exists some constant $N_0>0$ depending only on $\nu$, $\alpha$, $T$ and
	$\|\rho_0\|_{L^1\cap L^\infty(\mathbb{R}^d)}$, such that for
	$N\geq N_0$, the estimator $\widehat{\nu}$ defined in \eqref{estimator1}
	is an approximation of $\nu$, and the following estimate holds
	\begin{equation}\label{maineq}
	\PP\Bigl(|\wh{\nu}-\nu|\leq C_\alpha\nu^{\frac{1}{2}} \Delta t^{\frac{1}{2}} (1+ \nu^{\frac{1}{2}}N^{-\delta}\log(N))+\gamma \nu\Bigr)\geq 1-N^{-\alpha}-2e^{-\frac{dKM\gamma^2}{8}},
	\end{equation}
	for any $\gamma\in(0,1)$, where $C_\alpha>0$ depends only
	on $\alpha$, $T$ and
	$\|\rho_0\|_{L^1\cap L^\infty(\mathbb{R}^d)}$.
	%  In particular, if we choose $\Delta t=\gamma=K^{\delta-\frac{1}{2}}$, it follows from \eqref{maineq} that
	%\begin{align}\label{maineq1}
%	&\PP\left(|\widehat{\nu}-\nu|\leq C_\alpha (\nu^{\frac{1}{2}}+\nu)K^{\frac{1}{2}\delta-\frac{1}{4}}\right)
%	\geq1-K^{-\alpha}.
	%\end{align}
\end{thm}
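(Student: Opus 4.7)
My plan is to decompose the one-step increment into drift and noise, expand the square, and split the estimator into three pieces. Setting $A_i^{(n)} := \int_{t_n}^{t_{n+1}}\frac{1}{N-1}\sum_{j\neq i}F^N(X_i^s-X_j^s)\,ds$, one has
\begin{equation*}
|X_i^{(n+1)}-X_i^{(n)}|^2 = |A_i^{(n)}|^2 + 2\sqrt{2\nu\Delta t}\,A_i^{(n)}\cdot\mathcal{N}_i^{(n)} + 2\nu\Delta t\,|\mathcal{N}_i^{(n)}|^2,
\end{equation*}
and I would write $\widehat\nu = T_1+T_2+T_3$ for the three corresponding summands in \eqref{estimator1}. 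The third piece, $T_3 = \frac{\nu}{dKM}\sum_{i,n}|\mathcal{N}_i^{(n)}|^2$, is the only term centred at $\nu$; the other two form the error that must be controlled.

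The argument then reduces to three ingredients. First, since $\sum_{i,n}|\mathcal{N}_i^{(n)}|^2\sim\chi^2_{dKM}$, a classical tail bound gives $|T_3-\nu|\le\gamma\nu$ with probability at least $1-2e^{-dKM\gamma^2/8}$; this produces the $\gamma\nu$ summand in \eqref{maineq}. Second, a Cauchy--Schwarz inequality in $\RR^{dKM}$ applied to the inner products yields the clean identity
\begin{equation*}
|T_2|\le 2\sqrt{T_1\,T_3},
\end{equation*}
which conveniently bypasses the lack of independence between $A_i^{(n)}$ and $\mathcal{N}_i^{(n)}$ (both depend on the Brownian motion on $[t_n,t_{n+1}]$) and reduces the cross term to a deterministic bound on $T_1$. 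Third, Cauchy--Schwarz in time gives $T_1 \le \frac{\Delta t}{2d}\bigl(\sup_{s\in[0,T],\,1\le i\le K}|f_i^s|\bigr)^2$ with $f_i^s:=\frac{1}{N-1}\sum_{j\neq i}F^N(X_i^s-X_j^s)$, so the entire error boils down to a uniform bound on the empirical drift.

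I expect the main obstacle to be precisely this drift bound. The target is: for $0<\delta<1/d$ and $N\ge N_0$, with probability at least $1-N^{-\alpha}$,
\begin{equation*}
\sup_{t\in[0,T]}\sup_{1\le i\le K}|f_i^t|\le C(\alpha,T,\|\rho_0\|_{L^1\cap L^\infty})\bigl(1+\sqrt{\nu}\,N^{-\delta}\log N\bigr).
\end{equation*}
This should follow from the mean-field propagation-of-chaos estimates for the regularized Keller--Segel/aggregation system in the spirit of \cite{HH1,HH2,Fournier}: one compares the empirical drift to $F^N\ast\rho$, which is bounded because $\rho\in L^1\cap L^\infty$, and bounds the fluctuation by $\sqrt\nu\,N^{-\delta}\log N$, where the $\sqrt\nu$ encodes the diffusive noise level driving the fluctuations, $N^{-\delta}$ is the regularization scale in \eqref{kernel}, and the $\log N$ comes from a Gronwall/union-bound argument over the $N$ particles; the restriction $\delta<1/d$ arises exactly at the threshold beyond which the cutoff is no longer strong enough to tame the Newtonian singularity in this estimate. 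This is the step that actually uses the singular mean-field machinery; everything else is $\chi^2$ concentration plus Cauchy--Schwarz.

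Putting the pieces together, on the intersection of the two high-probability events the triangle inequality gives $|\widehat\nu-\nu|\le T_1+2\sqrt{T_1T_3}+\gamma\nu$; using $T_3\le 2\nu$ and $\sqrt{T_1}\le C\sqrt{\Delta t}\bigl(1+\sqrt\nu N^{-\delta}\log N\bigr)/\sqrt{2d}$ from the drift bound yields
\begin{equation*}
|\widehat\nu-\nu|\le C_\alpha\sqrt{\nu\Delta t}\bigl(1+\sqrt\nu N^{-\delta}\log N\bigr)+\gamma\nu,
\end{equation*}
which is exactly \eqref{maineq} after taking the intersection of events.
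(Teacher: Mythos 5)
Your proposal is correct and follows essentially the same route as the paper: your $T_3$ is precisely the paper's intermediate estimator $\nu_{K,N}$, your $\chi^2$ concentration step is the paper's Theorem~\ref{thm3}, your Cauchy--Schwarz bound on the cross term is the paper's treatment of the $|a_i^n b_i^n|$ term, and your uniform drift bound is exactly what the paper proves via Theorem~\ref{thminteract} (splitting the empirical drift into the mean-field field $F^N\ast\rho$, bounded by Lemma~\ref{converse}, plus a fluctuation of size $\nu^{1/2}N^{-\delta}\log N$ controlled by the mean-field limit). Your algebraic identity $|T_2|\le 2\sqrt{T_1T_3}$ is a slightly cleaner packaging of the paper's estimate (which instead carries a factor $\widehat\nu^{1/2}$ that must then be bounded a priori), but the decomposition, the key lemmas, and the resulting error terms are the same.
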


Let us remark on two simple consequences from \eqref{maineq}. 
	If we consider $N \to \infty$, \eqref{maineq} simplifies to 
	\begin{equation}\label{maineqNinf}
	\PP\Bigl(|\wh{\nu}-\nu|\leq C_\alpha\nu^{\frac{1}{2}} \Delta t^{\frac{1}{2}} + \gamma \nu\Bigr)\geq 1-2e^{-\frac{dKM\gamma^2}{8}}
	\end{equation}
	for any $\gamma \in (0, 1)$. Thus despite that we are dealing with
	an interacting system, when $N$ is large the accuracy of the
	estimator is similar to that based on using $K$ (independent)
	trajectory observations of a \emph{non-interacting} particle system.
	Moreover to get from \eqref{maineq} a simpler looking error bound,
	we can choose $\Delta t^{\frac{1}{2}}=\gamma$ (assuming we
	are able to adjust the frequency of observations) and get
	\begin{align}\label{maineq1}
	&\PP\left(|\widehat{\nu}-\nu|\leq (C_\alpha\nu^{\frac{1}{2}}+\nu)\Delta t^{\frac{1}{2}}\right)
	\geq1-2e^{-\frac{dKT}{8}},
	\end{align}
	where we used the fact that $M\gamma^2=\frac{T}{\Delta t}\gamma^2=T$. Estimate \eqref{maineq1} indicates that increasing the number $K$ of the observed data improves the accuracy of our estimator $\widehat \nu$ (the probability increase as $K$ becomes larger).

To prove the theorem on the error of the estimator, we defined a intermediate estimator
\begin{equation}
 \nu_{K,N}:=\frac{1}{2dKT} \sum_{i=1}^K\sum_{n=0}^{M-1} \left \lvert X_{i}^{(n+1)}-X_{i}^{(n)}-\int_{t_n}^{t_{n+1}}\frac{1}{N-1}\sum_{j\neq i}^{N}F^N\big(X_{i}^s-X_{j}^s\big)ds\right \rvert^2,
\end{equation}
then we split the error into two parts:
\begin{align}\label{split1}
|\widehat{\nu}-\nu|\leq |\widehat{\nu}-\nu_{K,N}|+|\nu_{K,N}-\nu|.
\end{align}
Let us denote 
\begin{align}
a_i^n:=X_{i}^{(n+1)}-X_{i}^{(n)},\quad b_i^n:= \int_{t_n}^{t_{n+1}}\frac{1}{N-1}\sum_{j\neq i}^{N}F^N\big(X_{i}^s-X_{j}^s\big)ds,
\end{align}
then
\begin{align}
|\nu_{K,N}-\widehat{\nu}|&=\frac{1}{2dKT} \sum_{i=1}^K\sum_{n=0}^{M-1} \left||a_i^n-b_i^n|^2- |a_i^n|^2\right|\notag\\
&\leq \frac{1}{2dKT} \sum_{i=1}^K\sum_{n=0}^{M-1}  |b_i^n|^2 +\frac{1}{dKT} \sum_{i=1}^K\sum_{n=0}^{M-1} |a_i^nb_i^n| \notag\\
&\leq \frac{1}{2dKT} \sum_{i=1}^K\sum_{n=0}^{M-1}  |b_i^n|^2 +(\frac{1}{dKT}\sum_{i=1}^K\sum_{n=0}^{M-1} |a_i^n|^2)^\frac{1}{2}(\frac{1}{dKT}\sum_{i=1}^K\sum_{n=0}^{M-1} |b_i^n|^2)^\frac{1}{2}\notag\\
&\leq\frac{1}{2dKT} \sum_{i=1}^K\sum_{n=0}^{M-1}  |b_i^n|^2  +(2\widehat \nu)^{\frac{1}{2}}(\frac{1}{dKT}\sum_{i=1}^K\sum_{n=0}^{M-1} |b_i^n|^2)^\frac{1}{2}.
\end{align}
Notice that
\begin{align}
&\frac{1}{2dKT} \sum_{i=1}^K\sum_{n=0}^{M-1}  |b_i^n|^2=\frac{1}{2dKT} \sum_{i=1}^K\sum_{n=0}^{M-1} \left| \int_{t_n}^{t_{n+1}}\frac{1}{N-1}\sum_{j\neq i}^{N}F^N\big(X_{i}^s-X_{j}^s\big)ds\right|^2\notag \\
\leq & \frac{1}{dKT}\sum_{i=1}^K\sum_{n=0}^{M-1} \left| \int_{t_n}^{t_{n+1}}\left(\frac{1}{N-1}\sum_{j\neq i}^{N}F^N\big(X_{i}^s-X_{j}^s\big) -\int_{\mathbb{R}^d}F^N(X_i^{s}-y)\rho(y,s)dy\right)ds\right|^2 \notag \\
&+\frac{1}{dKT} \sum_{i=1}^K\sum_{n=0}^{M-1}\left|\int_{t_n}^{t_{n+1}}\int_{\mathbb{R}^d}F^N(X_i^{s}-y)\rho(y,s)dyds\right|^2\notag\\
:=&|\mathcal{I}_2|+|\mathcal{I}_3|,
\end{align}
which implies
\begin{equation}
|\nu_{K,N}-\widehat{\nu}|\leq |\mathcal{I}_2|+|\mathcal{I}_3|+(2\widehat \nu)^{\frac{1}{2}}(2|\mathcal{I}_2|+2|\mathcal{I}_3|)^{\frac{1}{2}}.
\end{equation}

Collecting above inequality and \eqref{split1}, we concludes
\begin{align}\label{splitm}
|\widehat{\nu}-\nu|\leq  (1+2\widehat{\nu}^\frac{1}{2})(|\mathcal{I}_2|^{\frac{1}{2}}+|\mathcal{I}_3|^{\frac{1}{2}})+|\nu_{K,N}-\nu|.
\end{align}
when $|\mathcal{I}_2|, |\mathcal{I}_3|<1$.

Moreover, notice that
\begin{align}\label{boundedhatnu}
\widehat{\nu} =\frac{1}{2dKT} \sum_{i=1}^K\sum_{n=0}^{M-1} \left | a_i^n\right|^2&\leq  \frac{1}{dKT} \sum_{i=1}^K\sum_{n=0}^{M-1} \left | a_i^n-b_i^n\right|^2+ \frac{1}{dKT}\sum_{i=1}^K\sum_{n=0}^{M-1} \left | b_i^n\right|^2
 \notag\\
&\leq  2\nu_{K,N}+2|\mathcal{I}_2|+2|\mathcal{I}_3|\notag\\
&\leq 2 |\nu_{K,N}-\nu|+2|\mathcal{I}_2|+2|\mathcal{I}_3|+2\nu\leq 6+2\nu,
\end{align} 
when $|\nu_{K,N}-\nu|,|\mathcal{I}_2|, |\mathcal{I}_3|<1$.
Combing \eqref{boundedhatnu} with \eqref{splitm}, it yields that
\begin{equation}\label{split}
|\widehat{\nu}-\nu|\leq C \nu^{\frac{1}{2}}(|\mathcal{I}_2|^{\frac{1}{2}}+|\mathcal{I}_3|^{\frac{1}{2}})+|\nu_{K,N}-\nu|,
\end{equation}
where $C$ is a positive number.

In the sequel, we will see that the estimate of $|\mathcal{I}_3|$ is a
direct result from the property of regularized kernel $F^N$ (see estimate \eqref{I3}). And the
estimate of $|\mathcal{I}_2|$  is an estimate of interaction (see Theorem \ref{thminteract}),
which follows from the mean-field limit result (see Theorem \ref{thmmean}). As for the estimate of
$|\nu_{K,N}-\nu|$, it can be deduced from a concentration inequality of
Chi-squared distribution (see Theorem \ref{thm3}).

The work is organized as follows: In the next section, we will give a rigorous proof of the mean-field limit for aggregation equations with Newtonian potential. Base on this, we also obtain an error estimate on interaction. Section 3 is devoted to prove that our estimator $\wh{\nu}$ is a good approximation of $\nu$ and the convergence rate between them is achieved. Then in Section 4 we further extend our result to the case where the aggregation equation has a bounded Lipschitz interacting force.

\section{Mean-field limit and estimate on interaction}

In this section we will prove the mean-field limit for particle system \eqref{particl system}. Namely, given the solution $\rho$ to the mean-field  equation \eqref{RKS}, we construct a mean-field trajectories $\{Y_i^t\}_{i=1}^N$ from  \eqref{RKS}, then we  prove the closeness between $X_t=(X_1^t,\cdots,X_N^t)$ and $Y_t=(Y_1^t,\cdots,Y_N^t)$. 
To do this, we shall consider again a Newtonian system with noise. However, this time not subject to the pair interaction but under the influence of the external mean field $F^N\ast \rho(x,t)$
\begin{equation}\label{meandynamics}
dY_{i}^t=\int_{\RR^d}F^N\big(Y_i^t-y\big)\rho(y,t)dy\,dt+\sqrt{2\nu}\,dB_i^t,\quad i=1,\cdots,N,
\end{equation}
here we let $\{Y_i^t\}_{i=1}^N$ has the same initial condition as
$\{X_i^t\}_{i=1}^N$ (i.i.d. with the common density $\rho_0$).  Since the
particles are subject to an external field, the independence is
conserved. Therefore the $\{Y_i^t\}_{i=1}^N$ are distributed i.i.d.
according to the common probability density $\rho_t$. We remark that the
aggregation equation \eqref{RKS} is Kolmogorov's forward equation for any
solution of \eqref{meandynamics}, and in particular their probability
distribution $\rho_t$ solves \eqref{RKS}.

\subsection{Preliminaries}
\textbf{Notations:}
The generic constant will be denoted generically by $C$, even if it is different from line to line. The notation $\norm{\cdot}_{p}$ represents the usual $L^p$-norm of a function for any $1\leq p\leq \infty$. For a vector $X_t=(X_1^t,\dots,X_N^t)$, we denote
\begin{equation}
\norm{X_t}_\infty:=\sup\limits_{i=1,\cdots,N}|X_i^t|.
\end{equation}

Since error estimates obtained later are valid when the solution of PDE \eqref{RKS} is regular enough, we assume that
\begin{equation}\label{initial}
0\leq\rho_0\in L^1\cap L^\infty(\mathbb{R}^d),
\end{equation}
then equation \eqref{RKS} has a unique local solution with the following regularity
\begin{equation}
\|\rho\|_{L^\infty\left(0,T; L^1\cap L^\infty(\mathbb{R}^d)\right)}\leq C\left(\|\rho_0\|_{L^1\cap L^\infty(\mathbb{R}^d)}\right)=:C_{\rho_0},
\end{equation}
where $C_{\rho_0}$ is independent of $N$ and $T>0$ depends only on $\nu$ and $\|\rho_0\|_{L^1\cap L^\infty(\mathbb{R}^d)}$. The proof of this result is a standard process (see for example \cite[Proposition 4.1]{fetecau2018propagation}). 

Let us recall some estimates of the regularized kernel $F^N$ defined in \eqref{kernel}:
\begin{lem}\label{lmkenerl}(\cite[Lemma 2.1]{HH2})
	\begin{enumerate}[(i)]
		\item $F^N(0)=0$, $F^N(x)=F(x)$ for any $|x|\geq N^{-\delta}$ and  
		$|F^N(x)|\leq |F(x)|$ for  
			any $x\in\mathbb{R}^d$;
		\item $|\partial^\beta F^N(x)|\leq CN^{(d+|\beta|-1)\delta} \mbox{ for  
			any } x\in\mathbb{R}^d$;
		\item $\|F^N\|_{2}\leq CN^{(\frac{d}{2}-1)\delta}.$
	\end{enumerate}
\end{lem}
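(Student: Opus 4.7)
My plan is to establish each of the three properties by exploiting the fact that $F = \mp \nabla \Phi$ is the gradient of the Newtonian potential together with the radial symmetry and compact support of the mollifier $\psi_N$. The cleanest route for (i) is Newton's shell theorem: since $\psi_N$ is radially symmetric with unit mass and supported in $\overline{B(0, N^{-\delta})}$, convolving with the Newtonian field yields the identity
\[
F^N(x) = F(x)\, M_N(|x|), \qquad M_N(r) := \int_{|y| \leq r} \psi_N(y)\, dy \in [0, 1].
\]
All three sub-claims of (i) follow at once: $F^N(0) = 0$ since $M_N(0) = 0$; $F^N(x) = F(x)$ for $|x| \geq N^{-\delta}$ since $M_N(|x|) = 1$ outside the support of $\psi_N$; and $|F^N(x)| \leq |F(x)|$ since $M_N \leq 1$. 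As a sanity check, the first assertion also follows from oddness of $F$ paired with evenness of $\psi_N$, while the second is the mean-value property for the harmonic function $F$ on the ball $B(x, N^{-\delta})$, which avoids the singularity.

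For (ii), I would transfer all derivatives onto the smooth mollifier, writing $\partial^\beta F^N = F \ast \partial^\beta \psi_N$. A direct scaling computation gives $\|\partial^\beta \psi_N\|_\infty \leq C N^{(d + |\beta|)\delta}$ with $\mathrm{supp}(\partial^\beta \psi_N) \subseteq \overline{B(0, N^{-\delta})}$, so that
\[
|\partial^\beta F^N(x)| \leq C N^{(d + |\beta|)\delta} \int_{|y| \leq N^{-\delta}} |F(x - y)|\, dy.
\]
The worst case of the inner integral occurs when the ball of integration contains the origin, giving $\int_{|z| \leq N^{-\delta}} |z|^{-(d-1)}\, dz \leq C N^{-\delta}$ in polar coordinates; multiplying yields the claimed $C N^{(d + |\beta| - 1)\delta}$.

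For (iii), I would split $\|F^N\|_2^2$ at the radius $|x| = N^{-\delta}$. On the inner region $\{|x| < N^{-\delta}\}$, the $|\beta| = 0$ case of (ii) gives $|F^N| \leq C N^{(d-1)\delta}$, so the $L^2$ contribution is controlled by $C N^{2(d-1)\delta} \cdot N^{-d\delta} = C N^{(d-2)\delta}$. On the outer region, (i) gives $F^N = F$ pointwise, and $\int_{|x| \geq N^{-\delta}} |F|^2\, dx$ scales like $\int_{N^{-\delta}}^\infty r^{-(d-1)}\, dr \leq C N^{(d-2)\delta}$. Taking a square root yields $\|F^N\|_2 \leq C N^{(d/2 - 1)\delta}$. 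The main obstacle is really identifying the right framing for (i); the shell-theorem identity is the key structural input, and once it is in hand the remaining bounds in (ii) and (iii) reduce to routine scaling estimates.
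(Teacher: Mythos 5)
The paper itself does not prove this lemma; it is imported verbatim from \cite[Lemma 2.1]{HH2}, so there is no line-by-line comparison to make. As a self-contained argument, your proof is essentially right and, for (i), arguably cleaner than the usual direct computation: the shell-theorem identity $F^N(x)=F(x)M_N(|x|)$ packages the three sub-claims at once, whereas the standard route (as in the cited source) obtains $F^N(0)=0$ from the oddness of $F$ against the radial $\psi_N$ and $F^N=F$ outside the mollification radius from the mean-value property of the harmonic components of $F$ --- both of which you also mention. Two cosmetic points: at $x=0$ the identity is formally $\infty\cdot 0$, so the oddness argument (not $M_N(0)=0$) is the one that actually proves $F^N(0)=0$; and the support radius of $\psi$ should be normalized to $1$ (the paper only states $\psi\in C_0^\infty$), which is implicitly needed for (i) in the stated form. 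Parts (ii) and (iii) follow the same scaling computations any proof must do, and your rearrangement remark justifying that the centered ball is the worst case for $\int_{|y|\le N^{-\delta}}|F(x-y)|\,dy$ is correct.

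The one genuine gap is dimensional: in (iii) your outer-region estimate $\int_{|x|\ge N^{-\delta}}|F|^2\,dx\le C\int_{N^{-\delta}}^{\infty}r^{-(d-1)}\,dr\le CN^{(d-2)\delta}$ is only valid for $d\ge 3$; for $d=2$ the integral $\int_{N^{-\delta}}^{\infty}r^{-1}\,dr$ diverges, so $F^N\notin L^2(\mathbb{R}^2)$ at all (indeed even locally one only gets $\|F^N\|_{L^2(B_1)}\le C\sqrt{\log N}$, not the claimed constant $N^{(\frac{d}{2}-1)\delta}=1$). This defect is really inherited from the statement as quoted --- the source \cite{HH2} works in $d=3$ (note the leftover $\mathbb{R}^3$'s in the proof of Lemma \ref{lmlarge}) --- but since you present the computation as closing the case for all $d\ge 2$, you should either restrict (iii) to $d\ge 3$ or replace it by a local $L^2$ bound with the logarithmic correction in $d=2$, and note that the way (iii) is used (via $\int F^N(y-x)^2\rho(x)\,dx\le\|\rho\|_\infty\|F^N\|_2^2$) would then also need the corresponding adjustment.
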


Next we define a cut-off function $L^N$, which will provide the local Lipschitz bound for $F^N$.
\begin{defn}\label{defLN}
	Let
	\begin{equation}\label{lN}
	L^N(x)=\left\{
	\begin{aligned}
	&\frac{6^d}{|x|^{d}},  && \text{ if } |x|\geq 6N^{-\delta},\\
	&N^{d\delta},  && \text{ else },
	\end{aligned}
	\right.
	\end{equation}
	and $\mathcal{L}^N: \RR^{dN}\rightarrow \RR^N$ be defined by  
	$(\mathcal{L}^N(X_t))_i:=\frac{1}{N-1}\sum\limits_{i\neq j}^NL^N(X_i^t-X_j^t)$.  
	Furthermore, we define $\overline {\mathcal{L}}^N(Y_t)$ by $( \overline  
	{\mathcal{L}}^N( Y_t))_i:=\int_{\RR^d} L^N(Y_i^t-x)\rho(x,t)dx$.
\end{defn}
Denote
\begin{equation}\label{FN}
(\mathcal{F}^N(Y^t))_i:=\frac{1}{N-1}\sum_{j\neq i}^NF^N(Y_i^t-Y_j^t),
\end{equation}
then we have the local Lipschitz continuity of $\mathcal{F}^N$:
\begin{lem}{\cite[Lemma 2.3]{vlasovhui}}\label{lmlip}
	If $\|X_t-Y_t\|_\infty\leq 2N^{-\delta}$, then it holds that
	\begin{equation}\label{lmlipeq}
	\| \mathcal{F}^N(X_t)-\mathcal{F}^N(Y_t)\|_\infty\leq C\|\mathcal{L}^N(Y_t)\|_\infty\|X_t-Y_t\|_\infty,
	\end{equation}
	for some $C>0$ independent of $N$.
\end{lem}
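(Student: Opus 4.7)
I will prove the bound componentwise and then pair-by-pair. For fixed $i$, write
\[
(\mathcal{F}^N(X_t)-\mathcal{F}^N(Y_t))_i=\frac{1}{N-1}\sum_{j\neq i}\bigl(F^N(X_i^t-X_j^t)-F^N(Y_i^t-Y_j^t)\bigr),
\]
and bound each summand via the mean value theorem applied to $F^N$ on the segment joining $Y_i^t-Y_j^t$ and $X_i^t-X_j^t$. Note that
\[
|X_i^t-X_j^t-(Y_i^t-Y_j^t)|\leq 2\|X_t-Y_t\|_\infty\leq 4N^{-\delta},
\]
so the segment is short; the task reduces to controlling the local Lipschitz constant of $F^N$ along it. The cut-off $L^N$ is designed precisely for this purpose: it reproduces (up to $6^d$) the Newtonian gradient scale $|x|^{-d}$ far from the origin and the universal derivative bound $N^{d\delta}$ of Lemma~\ref{lmkenerl}(ii) near the origin.

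The estimate is then carried out in two cases according to the size of $|Y_i^t-Y_j^t|$. When $|Y_i^t-Y_j^t|\geq 6N^{-\delta}$, every point $z$ on the segment satisfies $|z|\geq |Y_i^t-Y_j^t|-4N^{-\delta}\geq\tfrac{1}{3}|Y_i^t-Y_j^t|\geq 2N^{-\delta}$; by Lemma~\ref{lmkenerl}(i) we have $F^N=F$ throughout this region, and the gradient of the Newtonian kernel is bounded by $C|z|^{-d}\leq C'|Y_i^t-Y_j^t|^{-d}=C''L^N(Y_i^t-Y_j^t)$. When $|Y_i^t-Y_j^t|<6N^{-\delta}$, Lemma~\ref{lmkenerl}(ii) yields the global bound $|\nabla F^N|\leq CN^{d\delta}$, which in this regime is exactly $CL^N(Y_i^t-Y_j^t)$. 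In either case the mean value theorem gives
\[
|F^N(X_i^t-X_j^t)-F^N(Y_i^t-Y_j^t)|\leq CL^N(Y_i^t-Y_j^t)\|X_t-Y_t\|_\infty.
\]
Averaging over $j\neq i$ recognizes $(\mathcal{L}^N(Y_t))_i$, and taking the supremum over $i$ delivers the claimed inequality.

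The main obstacle is Case~1: the gradient of the Newtonian kernel blows up like $|z|^{-d}$, so a careless perturbation of $z$ could destroy the bound $|z|^{-d}\lesssim |Y_i^t-Y_j^t|^{-d}$. This is precisely why the threshold in the definition of $L^N$ is $6N^{-\delta}$ rather than the more natural $N^{-\delta}$: the extra $2N^{-\delta}$ cushion simultaneously guarantees that $F^N$ coincides with $F$ on the whole segment and that the local Lipschitz constant at every intermediate point is comparable, up to a dimensional constant, to $L^N(Y_i^t-Y_j^t)$. Once this geometric care is taken, the remainder of the argument is a routine Lipschitz-by-Lipschitz estimate, with no dependence on $N$ in the final constant.
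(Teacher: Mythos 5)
Your proof is correct. A small caveat about comparison: the paper itself does not prove this lemma --- it cites \cite[Lemma 2.3]{vlasovhui} and moves on --- so there is no in-paper argument to set yours against. That said, your proof is the natural one and it is complete. The case split at $|Y_i^t-Y_j^t|\geq 6N^{-\delta}$ is exactly the right threshold: with $|X_i^t-X_j^t-(Y_i^t-Y_j^t)|\leq 4N^{-\delta}$, every $z$ on the connecting segment then satisfies $|z|\geq\tfrac{1}{3}|Y_i^t-Y_j^t|\geq 2N^{-\delta}>N^{-\delta}$, so $\nabla F^N(z)=\nabla F(z)$ by Lemma~\ref{lmkenerl}(i), and since $|\nabla F(z)|\lesssim|z|^{-d}\lesssim|Y_i^t-Y_j^t|^{-d}$ this is controlled by $L^N(Y_i^t-Y_j^t)=6^d|Y_i^t-Y_j^t|^{-d}$. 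In the near regime $|Y_i^t-Y_j^t|<6N^{-\delta}$ the global bound $|\nabla F^N|\leq CN^{d\delta}$ from Lemma~\ref{lmkenerl}(ii) matches $L^N(Y_i^t-Y_j^t)=N^{d\delta}$. Summing $\frac{1}{N-1}\sum_{j\neq i}L^N(Y_i^t-Y_j^t)=(\mathcal{L}^N(Y_t))_i$ and taking $\sup_i$ yields the claim, with a constant independent of $N$. You have correctly identified why the cushion to $6N^{-\delta}$ (rather than $N^{-\delta}$) in the definition of $L^N$ is what makes the argument go through.
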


The following observations of $F^N$ and $L^N$ turn out to be very helpful in the sequel:
\begin{lem}{\cite[Lemma 2.4]{vlasovhui}}\label{converse}
	Let $L^N(x)$ be defined in Definition \ref{defLN} and $\rho \in L^{1}\cap L^{\infty} (\RR^d)$.  Then there exists a constant $C>0$ independent of $N$ such that
	\begin{equation}\label{con1}
	\norm{L^N\ast \rho}_\infty\leq C\log(N)(\norm{\rho}_1+\norm{\rho}_\infty),\quad\norm{(L^N)^2\ast \rho}_\infty\leq CN^{d\delta}(\norm{\rho}_1+\norm{\rho}_\infty);
	\end{equation}
	and
	\begin{equation}\label{con2}
	\norm{ F^N\ast \rho}_\infty\leq C(\norm{\rho}_1+\norm{\rho}_\infty),\quad \norm{\nabla F^N\ast \rho}_\infty\leq C\log(N)(\norm{\rho}_1+\norm{\rho}_\infty).
	\end{equation}
\end{lem}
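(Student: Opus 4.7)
The plan is to bound each convolution pointwise, for an arbitrary $x \in \RR^d$, by splitting the integration domain into a small ball around $x$ where the cutoff regime of the kernel dominates, an intermediate annulus where the algebraic singularity of the Newtonian type contributes a logarithm, and a far field where we pay using $\|\rho\|_1$. The three scales to use are $|x-y|\leq c N^{-\delta}$ (inside the regularization), $cN^{-\delta}\leq |x-y|\leq 1$ (singular tail), and $|x-y|\geq 1$ (bounded by $\|\rho\|_1$). The essential inputs are Lemma \ref{lmkenerl} and the explicit piecewise definition \eqref{lN} of $L^N$.

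For the $L^N$ estimate, on the inner ball use $L^N(x-y)\leq N^{d\delta}$ together with $|B(0,6N^{-\delta})|\leq C N^{-d\delta}$, which contributes $C\|\rho\|_\infty$. On the intermediate annulus use $L^N(x-y)=6^d|x-y|^{-d}$ and switch to polar coordinates, so that the radial integral becomes $\int_{6N^{-\delta}}^{1} r^{-1}\,dr\leq C\delta\log N$, producing $C\log(N)\|\rho\|_\infty$. The far field is controlled by $\|\rho\|_1$. Summing yields the first bound. For $(L^N)^2\ast \rho$ the same three-region split gives $C N^{d\delta}\|\rho\|_\infty$ from both the inner ball (integrand $\leq N^{2d\delta}$, volume $\sim N^{-d\delta}$) and the annulus (integrand $\sim r^{-2d}$, radial integral $\int r^{-d-1}\,dr\sim N^{d\delta}$), plus $\|\rho\|_1$ from the far field.

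For $F^N\ast \rho$ and $\nabla F^N\ast \rho$, I use Lemma \ref{lmkenerl}(i)--(ii) to bound $|F^N|\leq C N^{(d-1)\delta}$ and $|\nabla F^N|\leq C N^{d\delta}$ uniformly, and to replace $F^N$ by the exact Newtonian expression for $|x-y|\geq N^{-\delta}$. On the inner ball, the uniform bound times the volume $\sim N^{-d\delta}$ gives $C\|\rho\|_\infty$ for $F^N\ast\rho$ (with an extra factor $N^{-\delta}$ to spare) and $C\|\rho\|_\infty$ for $\nabla F^N\ast \rho$. On the intermediate annulus, $|F(x-y)|\leq C|x-y|^{-(d-1)}$ makes the radial integral bounded, giving $C\|\rho\|_\infty$, while $|\nabla F(x-y)|\leq C|x-y|^{-d}$ produces the logarithmic factor $C\log(N)\|\rho\|_\infty$ as above. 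The far field again gives $\|\rho\|_1$. Combining the three regions yields \eqref{con2}.

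None of the steps is really an obstacle; the only mild subtlety is that one must use the regularization at the right scale in each convolution: for $F^N$ and $\nabla F^N$ the cutoff is at $|x|\sim N^{-\delta}$, while for $L^N$ the definition in \eqref{lN} cuts off at $6N^{-\delta}$, and the prefactor $6^d$ must be tracked to ensure that the annular integral is absolutely integrable up to constants independent of $N$. Since all the radial integrals are elementary and the bounds are saturated by the annular piece (logarithmic for $L^N$ and $\nabla F^N\ast\rho$, polynomial $N^{d\delta}$ for $(L^N)^2\ast\rho$), absorbing constants into $C$ completes the proof.
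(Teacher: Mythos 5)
Your proof is correct: the three-scale decomposition (cutoff ball $|x-y|\lesssim N^{-\delta}$, singular annulus up to radius $1$, far field controlled by $\norm{\rho}_1$), combined with the pointwise bounds $|F^N|\leq|F|$, $|\nabla F^N|\leq CN^{d\delta}$ and the explicit form \eqref{lN} of $L^N$, yields exactly the stated logarithmic and $N^{d\delta}$ factors, and the transition-region issue you flag is harmless since both the uniform and the algebraic bounds are valid there up to constants. The paper itself gives no proof, citing \cite[Lemma 2.4]{vlasovhui}, and your argument is the standard one used there, so there is nothing to add.
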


Also, we need the following concentration inequality to  provide us the probability bounds of random variables:
\begin{lem}\label{central} 
	Let $Z_1,\cdots,Z_N$ be $i.i.d.$ random variables with $\mathbb{E}[Z_i]=0,$ $\mathbb{E}[Z_i^2]\leq g(N)$
	and $|Z_i|\leq C\sqrt{Ng(N)}$. Then for any $\alpha>0$, the sample mean $\bar{Z}=\frac{1}{N}\sum_{i=1}^{N}Z_i$ satisfies
	\begin{equation}
	\PP\left(|\bar{Z}|\geq\frac{C_\alpha \sqrt{g(N)}\log(N)}{\sqrt{N}}\right)\leq N^{-\alpha},
	\end{equation}
	where $C_\alpha$ depends only on $C$ and $\alpha$.
\end{lem}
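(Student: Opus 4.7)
The statement is a standard sub-exponential style concentration bound, so the plan is to apply Bernstein's inequality to the sum $S_N = \sum_{i=1}^N Z_i = N\bar Z$. We have i.i.d. centered variables with variance bound $\mathrm{Var}(Z_i)\leq g(N)$ and uniform bound $|Z_i|\leq M$ with $M := C\sqrt{Ng(N)}$, which is exactly the setup Bernstein handles:
\begin{equation*}
\PP\bigl(|S_N|\geq s\bigr) \;\leq\; 2\exp\!\left(-\frac{s^2/2}{Ng(N) + Ms/3}\right).
\end{equation*}

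The tail level we want is $t = C_\alpha \sqrt{g(N)}\log(N)/\sqrt{N}$ for $\bar Z$, i.e., $s = Nt = C_\alpha \sqrt{Ng(N)}\,\log(N)$ for $S_N$. Substituting, the numerator becomes $\tfrac{1}{2}C_\alpha^2 Ng(N)\log^2(N)$ and the denominator becomes
\begin{equation*}
Ng(N) + \tfrac{1}{3}C\sqrt{Ng(N)}\cdot C_\alpha\sqrt{Ng(N)}\log(N) \;=\; Ng(N)\bigl(1+\tfrac{1}{3}CC_\alpha\log(N)\bigr),
\end{equation*}
so the factors of $Ng(N)$ cancel and the exponent reduces to $-\tfrac{C_\alpha^2\log^2(N)/2}{1+\tfrac{1}{3}CC_\alpha\log(N)}$. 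For $N$ large enough that $\tfrac{1}{3}CC_\alpha\log(N)\geq 1$, the denominator is at most $\tfrac{2}{3}CC_\alpha\log(N)$, and the bound simplifies to
\begin{equation*}
2\exp\!\left(-\frac{3C_\alpha}{4C}\log(N)\right) \;=\; 2\,N^{-3C_\alpha/(4C)}.
\end{equation*}

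Choosing $C_\alpha$ sufficiently large depending on $C$ and $\alpha$ (say $C_\alpha \geq \tfrac{4}{3}C(\alpha+1)$) absorbs the factor $2$ into the power and yields the claimed bound $N^{-\alpha}$. To handle small $N$ where the simplification above fails, one just enlarges $C_\alpha$ further: the statement is vacuous or trivial for bounded $N$ since $\bar Z$ is itself bounded by $C\sqrt{Ng(N)}$, which is dominated by $C_\alpha\sqrt{g(N)}\log(N)/\sqrt{N}\cdot N$ once $C_\alpha$ is big enough in terms of the finitely many small $N$. No step is delicate; the only "choice" is how to split the Bernstein denominator, and the sub-Gaussian versus sub-exponential regime is governed by which of $1$ or $\tfrac{1}{3}CC_\alpha\log(N)$ dominates, which is precisely why the logarithmic factor in the target tail $\sqrt{g(N)}\log(N)/\sqrt{N}$ is necessary and sufficient.
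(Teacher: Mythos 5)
Your Bernstein-based argument is correct and is essentially the same route as the paper's: the paper gives no proof of its own but defers to \cite[Lemma 1]{GJ}, whose ``Taylor expansion and Markov's inequality'' is exactly the exponential-moment derivation of the Bernstein bound you quote, and your calculation (threshold $s=C_\alpha\sqrt{Ng(N)}\log N$, cancellation of $Ng(N)$, choice $C_\alpha\geq \tfrac{4}{3}C(\alpha+1)$) correctly yields $2N^{-3C_\alpha/(4C)}\leq N^{-\alpha}$ with the stated dependence of $C_\alpha$ on $C$ and $\alpha$ only. One cosmetic slip in your small-$N$ remark: to make the event empty you should compare the almost-sure bound $|\bar Z|\leq C\sqrt{Ng(N)}$ with the threshold $C_\alpha\sqrt{g(N)}\log(N)/\sqrt{N}$ itself (i.e.\ require $C_\alpha\log N>CN$ for the finitely many problematic $N$), not with $N$ times that threshold — an immediately fixable point that does not affect the argument.
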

The proof can be seen in \cite[Lemma 1]{GJ}, which is a direct result of the Taylor expansion and the Markov's inequality.

\subsection{Mean-field limit for the aggregation equation with Newtonian potential}
In this section, we obtain the maximal distance between the exact microscopic dynamics \eqref{particl system} and the approximate mean-field dynamics \eqref{meandynamics}.
Denote
\begin{equation}\label{barFN}
(\overline{ \mathcal{F}}^N(Y_t))_i:=\int_{\RR^d} F^N(Y_i^t-x)\rho(x,t)dx,
\end{equation}
then we can introduce the following lemma of law of large numbers:
\begin{lem}\label{lmlarge} At any fixed time $t\in[0,T]$, suppose that $Y_t=(Y_i^t)_{i=1,\cdots,N}$ satisfies the mean-field dynamics \eqref{meandynamics} with i.i.d initial data sharing the common density $\rho_0$ satisfying \eqref{initial}.  Assume that $\mathcal{F}^N$ and $\overline {\mathcal{F}}^N$ are defined in \eqref{FN} and \eqref{barFN} respectively,  $\mathcal{L}^N$ and $\overline {\mathcal{L}}^N$ are showed in Definition \ref{defLN}. For any $\alpha>0$ and $0<\delta\leq \frac{1}{d}$, there exist a constant $C_{1,\alpha}>0$ depending only on  $\alpha$, $T$ and $C_{\rho_0}$ such that
	\begin{equation}\label{largef}
	\PP\left(\nel \mathcal{F}^N(Y_t)-\overline{ \mathcal{F}}^N(Y_t)\ner_\infty\geq C_{1,\alpha} N^{\frac{\delta(d-2)-1}{2}}\log (N)\right)\leq N^{-\alpha},
	\end{equation}
	and
	\begin{equation}\label{largel}
	\PP\left(\nel \mathcal{L}^N(Y_{t})-\overline{\mathcal{L}}^N(Y_{t})\ner_\infty\geq C_{1,\alpha} N^{\frac{d\delta-1}{2}}\log (N)\right)\leq N^{-\alpha}.
	\end{equation}
\end{lem}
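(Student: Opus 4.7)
\textbf{Proof proposal for Lemma \ref{lmlarge}.} The plan is to recognize that, since $Y_1^t,\dots,Y_N^t$ are i.i.d.\ with common density $\rho(\cdot,t)$, each component
\[
(\mathcal{F}^N(Y_t))_i - (\overline{\mathcal{F}}^N(Y_t))_i \;=\; \frac{1}{N-1}\sum_{j\neq i}\Bigl(F^N(Y_i^t-Y_j^t)-\int_{\RR^d}F^N(Y_i^t-y)\rho(y,t)\,dy\Bigr)
\]
is, \emph{conditionally on $Y_i^t$}, a sample mean of $N-1$ i.i.d.\ centered random variables $Z_j := F^N(Y_i^t-Y_j^t)-\int F^N(Y_i^t-y)\rho(y,t)\,dy$. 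I will therefore apply Lemma \ref{central} conditionally on $Y_i^t$, then take a union bound over $i$.

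First I would bound the conditional second moment: for any $x\in\RR^d$,
\[
\int_{\RR^d}|F^N(x-y)|^2\rho(y,t)\,dy \;\leq\; \|\rho(\cdot,t)\|_\infty\,\|F^N\|_2^2 \;\leq\; C_{\rho_0}\,N^{(d-2)\delta},
\]
using Lemma \ref{lmkenerl}(iii). So $\mathbb{E}[Z_j^2\mid Y_i^t]\leq g(N):=CN^{(d-2)\delta}$. For the almost-sure bound, Lemma \ref{lmkenerl}(ii) with $\beta=0$ gives $|F^N|\leq CN^{(d-1)\delta}$, hence $|Z_j|\leq CN^{(d-1)\delta}$. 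The hypothesis of Lemma \ref{central} requires $|Z_j|\leq C\sqrt{Ng(N)}=CN^{1/2+(d-2)\delta/2}$, which holds iff $(d-1)\delta\leq 1/2+(d-2)\delta/2$, i.e.\ $\delta\leq 1/d$ --- precisely the assumption made. Applying Lemma \ref{central} conditionally then yields, for each fixed $i$,
\[
\PP\Bigl(|(\mathcal{F}^N(Y_t))_i-(\overline{\mathcal{F}}^N(Y_t))_i|\geq C_\alpha N^{(\delta(d-2)-1)/2}\log N\,\Big|\,Y_i^t\Bigr)\leq N^{-(\alpha+1)},
\]
after replacing $\alpha$ by $\alpha+1$. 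Taking expectation, then a union bound over $i=1,\dots,N$, removes the extra $N$ factor and delivers \eqref{largef}.

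For \eqref{largel} I proceed identically with $F^N$ replaced by $L^N$. The pointwise bound is $|L^N(x)|\leq N^{d\delta}$ (obvious from \eqref{lN} since $6^d/|x|^d\leq N^{d\delta}$ on $|x|\geq 6N^{-\delta}$), and the conditional second moment is controlled by the second estimate in \eqref{con1} of Lemma \ref{converse}:
\[
\int_{\RR^d}|L^N(x-y)|^2\rho(y,t)\,dy\leq \|(L^N)^2\ast\rho\|_\infty\leq CN^{d\delta}(\|\rho\|_1+\|\rho\|_\infty).
\]
Thus $g(N)=CN^{d\delta}$, and the condition $N^{d\delta}\leq C\sqrt{Ng(N)}=CN^{(1+d\delta)/2}$ is again equivalent to $\delta\leq 1/d$. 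Lemma \ref{central} together with a union bound over $i$ produces the rate $N^{(d\delta-1)/2}\log N$ claimed in \eqref{largel}.

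The main subtlety is the conditioning step: one must verify that all the constants appearing in Lemma \ref{central} (the variance bound $g(N)$ and the deterministic $L^\infty$ bound on $Z_j$) are \emph{independent of $Y_i^t$}, which here follows from the translation invariance of Lebesgue measure and the uniform $L^\infty$ bound on $\rho$. Once this is secured, the probability bound obtained conditionally can be integrated out trivially. Everything else --- the choice of $g(N)$, the verification of $\delta\leq 1/d$, and the factor-$N$ loss in the union bound that is reabsorbed by redefining $\alpha$ --- is routine book-keeping of exponents.
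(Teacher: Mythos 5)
Your proposal is correct and follows essentially the same route as the paper: condition on $Y_i^t$, apply the concentration inequality of Lemma \ref{central} with $g(N)=CN^{(d-2)\delta}$ (resp.\ $CN^{d\delta}$) using Lemma \ref{lmkenerl} and Lemma \ref{converse}, check $|Z_j|\leq C\sqrt{Ng(N)}$ (which is where $\delta\leq 1/d$ enters, a point you verify explicitly while the paper only asserts it), and absorb the union-bound factor of $N$ by adjusting $\alpha$.
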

\begin{proof}
	We can prove this lemma by using Lemma \ref{central}.
	%This is result from the law of large numbers \cite[Corollary 7.3]{lazarovici2015mean}.
	Due to the exchangeability of the particles, we are ready to bound
	\begin{equation}
	(\mathcal{F}^N(Y_t))_1-(\overline {\mathcal{F}}^N(Y_t))_1=\frac{1}{N-1}\sum_{j=2}^NF^N(Y_1^t-Y_j^t)-\int_{\RR^3} F^N(Y_1^t-x)\rho(x,t)dx=\frac{1}{N-1}\sum_{j=2}^{N}Z_j,
	\end{equation}
	where 
    \begin{equation*}
      Z_j:=F^N(Y_1^t-Y_j^t)-\int_{\RR^d} F^N(Y_1^t-x)\rho(x,t)dx.
    \end{equation*}
	Since $Y_1^t$ and $Y_j^t$ are independent when $j\neq 1$ and $F^N(0)=0$, let us consider $Y_1^t$ as given and denote $\mathbb{E'}[\cdot]=\mathbb{E}[\cdot|Y_1^t]$. It is easy to show that
	$\mathbb{E}'[Z_j]=0$ since
	\begin{align*}
	\mathbb{E}'\left[F^N(Y_1^t-Y_j^t)\right]=\int_{\RR^d} F^N(Y_1^t-x)\rho(x,t)dx.
	\end{align*}
	
	To use Lemma \ref{central}, we need a bound for the variance
	\begin{equation}
	\mathbb{E}'\big[|Z_j|^2\big]=\mathbb{E}'\left[\left|F^N(Y_1^t-Y_j^t)-\int_{\RR^3} F^N(Y_1^t-x)\rho(x,t)dx\right|^2\right].
	\end{equation}
	Since it follows from Lemma \ref{converse} that
	\begin{equation}
	\int_{\RR^3} F^N(Y_1^t-x)\rho(x,t)dx\leq C(\norm{\rho}_1+\norm{\rho}_\infty),
	\end{equation}
	it suffices to bound
	\begin{equation}
	\mathbb{E'}\big[F^N(Y_1^t-Y_j^t)\big]=\int_{\RR^d} F^N(Y_1^t-x)\rho(x,t)dx\leq C(\norm{\rho}_1+\norm{\rho}_\infty)\leq C(T,C_{\rho_0}),
	\end{equation}
	and
	\begin{equation}
	\mathbb{E'}\big[F^N(Y_1^t-Y_j^t)^2\big]=\int_{\RR^d} F^N(Y_1^t-x)^2\rho(x,t)dx\leq \norm{\rho}_\infty\norm{F^N}_2^2\leq C(T,C_{\rho_0})N^{\delta(d-2)},
	\end{equation}
	where we have used $\norm{F^N}_2\leq CN^{\delta(\frac{d}{2}-1)}$   in Lemma \ref{lmkenerl} $(iii)$.
	Hence one has
	\begin{equation}
	\mathbb{E}'\big[|Z_j|^2\big]\leq CN^{\delta(d-2)}.
	\end{equation}
	
	So the hypotheses of Lemma \ref{central} are satisfied with $g(N)=CN^{\delta(d-2)}$. In addition, it follows from $(ii)$ in Lemma \ref{lmkenerl} that $|Z_j|\leq CN^{\delta (d-1)}\leq C\sqrt{Ng(N)}$. Hence, using Lemma \ref{central}, we have the probability bound 
	\begin{equation}
	\PP\left(\left|(\mathcal{F}^N(Y_t))_1-(\overline {\mathcal{F}}^N(Y_t))_1\right|\geq C(\alpha,T,C_{\rho_0})N^{\frac{\delta(d-2)-1}{2}}\log (N)\right)\leq N^{-\alpha}.
	\end{equation}
	Similarly, the same bound must also apply hold to other term with $i=2,\cdots,N$, which leads to
	\begin{equation}\label{residual1'}
	\PP\left(\nel \mathcal{F}^N(Y_t)-\overline {\mathcal{F}}^N(Y_t)\ner_\infty\geq C(\alpha,T,C_{\rho_0}) N^{\frac{\delta(d-2)-1}{2}}\log (N)\right)\leq N^{1-\alpha}.
	\end{equation}
	Let $C_{1,\alpha}$ be the constant in  \eqref{residual1'}, we conclude \eqref{largef}.
	
	To prove \eqref{largel}, we follow the same procedure above
	\begin{equation}
	(\mathcal{L}^N(Y_t))_1-(\overline {\mathcal{L}}^N(Y_t))_1=\frac{1}{N-1}\sum_{j=2}^NL^N(Y_1^t-Y_j^t)-\int_{\RR^d} L^N(Y_1^t-x)\rho(x,t)dx=\frac{1}{N-1}\sum_{j=2}^{N}Z_j,
	\end{equation}
	where $$Z_j=L^N(Y_1^t-Y_j^t)-\int_{\RR^3} L^N(Y_1^t-x)\rho(x,t)dx.$$ 
	It is easy to show that $\mathbb{E}'[Z_j]=0$.
	To use Lemma \ref{central}, we need a bound for the variance. One computes that
	\begin{equation}
	\mathbb{E'}\big[L^N(Y_1^t-Y_j^t)\big]=\int_{\RR^d} L^N(Y_1^t-x)\rho(x,t)dx\leq C\log(N)(\norm{\rho}_1+\norm{\rho}_\infty)\leq C(T,C_{\rho_0})\log(N),
	\end{equation}
	and
	\begin{equation}
	\mathbb{E'}\big[L^N(Y_1^t-Y_j^t)^2\big]=\int_{\RR^d} L^N(Y_1^t-x)^2\rho(x,t)dx\leq CN^{d\delta}(\norm{\rho}_1+\norm{\rho}_\infty)\leq C(T,C_{\rho_0})N^{d\delta},
	\end{equation}
	where we have used the estimates of $L^N$ in Lemma \ref{converse}.
	Hence one has
	\begin{equation}
	\mathbb{E}'\big[|Z_j|^2\big]\leq CN^{d\delta}.
	\end{equation}
	
	So the hypotheses of Lemma \ref{central} are satisfied with $g(N)=CN^{d\delta}$. In addition, it follows from Definition \ref{defLN}  that $|Z_j|\leq CN^{d\delta}\leq C\sqrt{Ng(N)}$. Hence, we have the probability bound 
	\begin{equation}
	\PP\left(\left|(\mathcal{L}^N(Y_t))_1-(\overline {\mathcal{L}}^N(Y_t))_1\right|\geq C(\alpha,T,C_{\rho_0}) N^{\frac{d\delta-1}{2}}\log (N)\right)\leq N^{-\alpha},
	\end{equation}
	by Lemma \ref{central}, which leads to
	\begin{equation}\label{residual1''}
	\PP\left(\nel \mathcal{L}^N(Y_t)-\overline{\mathcal{L}}^N(Y_t)\ner_\infty\geq C(\alpha,T,C_{\rho_0}) N^{\frac{d\delta-1}{2}}\log(N)\right)\leq N^{1-\alpha}.
	\end{equation}
	Thus, \eqref{largel} follows from \eqref{residual1''}.
	
\end{proof}

Next we improve the consistency error to all time. To do this, we need the following lemma, where we temporarily set the time step size $\Delta t=t_{n+1}-t_n=N^{-\frac{\beta}{d}}$ with $\beta>2$, which is only for the purpose of proving Proposition \ref{propconsis} and Proposition \ref{propstab}.  Here $N^{-\frac{\beta}{d}}$ won't influence the choice of the $\Delta t$ in Theorem \ref{mainthm}.
\begin{lem}\label{lmB}
	Assume that the time step size $\Delta t=t_{n+1}-t_n=N^{-\frac{\beta}{d}}$ for $\beta>2$ and $Y_t$ satisfies the mean-field dynamics \eqref{meandynamics}. There exists some constant $C_B>0$ depending only on $T$ and $C_{\rho_0}$, such that it holds
	\begin{equation}
	\PP\left(\sup\limits_n\sup\limits_{t\in[t_n,t_{n+1}]}\nel Y_t-Y_{t_n}\ner_\infty\geq C_B\nu^{\frac{1}{2}}N^{-\frac{\beta+2}{2d}}\right)\leq C_BN^{\frac{2+\beta}{2d}}\exp(-C_BN^{\frac{\beta-2}{d}}).
	\end{equation}
\end{lem}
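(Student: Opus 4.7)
The plan is to decompose the increment $Y_i^t - Y_i^{t_n}$ from the mean-field dynamics \eqref{meandynamics} into its drift and Brownian parts, control the drift deterministically using Lemma~\ref{converse}, and apply a Brownian maximal inequality together with a union bound over the $N$ particles and the $M$ time intervals to handle the martingale term.

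First, writing
\begin{equation*}
Y_i^t - Y_i^{t_n} = \int_{t_n}^{t}\int_{\mathbb{R}^d} F^N(Y_i^s - y)\rho(y,s)\,dy\,ds + \sqrt{2\nu}\,\bigl(B_i^t - B_i^{t_n}\bigr),
\end{equation*}
the drift integral is pointwise dominated, via Lemma~\ref{converse}, by $\norm{F^N\ast\rho(\cdot,s)}_\infty\,\Delta t \leq C_{\rho_0}\Delta t = C_{\rho_0}N^{-\beta/d}$. Since $\beta>2$, this deterministic piece is already of smaller order than the target threshold and is absorbed into it. All the real work therefore concerns the Brownian term, uniformly in $(i,n,t)$.

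For the Brownian term I would apply the reflection principle coordinate by coordinate. For a scalar Brownian motion $W$ one has
\begin{equation*}
\PP\Bigl(\sup_{s\in[0,\Delta t]}|W_s|\geq r\Bigr) \leq 4\exp\Bigl(-\frac{r^2}{2\Delta t}\Bigr),
\end{equation*}
and summing over the $d$ coordinates gives, for the rescaled $d$-dimensional increment,
\begin{equation*}
\PP\Bigl(\sqrt{2\nu}\sup_{s\in[t_n,t_{n+1}]}|B_i^s - B_i^{t_n}|\geq r\Bigr) \leq 4d\exp\Bigl(-\frac{r^2}{4\nu d\,\Delta t}\Bigr).
\end{equation*}
Calibrating $r$ so that $r^2/(\nu\Delta t)$ is a fixed multiple of $N^{(\beta-2)/d}$ produces a per-event tail of size $\exp(-C_B N^{(\beta-2)/d})$.

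Finally, a union bound over the $NM = T N^{1+\beta/d}$ pairs $(i,n)$, together with absorption of the polynomial prefactor into a slightly smaller constant inside the exponential — which is permissible since $\beta>2$ makes $N^{(\beta-2)/d}$ dominate $\log N$ — yields the stated probability estimate with prefactor $N^{(2+\beta)/(2d)}$ and rate $N^{(\beta-2)/d}$. The main obstacle is really just bookkeeping: tracking the $d$-dimensional constants in the reflection principle and choosing $C_B$ and the scale of $r$ so that both the deterministic drift contribution and the stated threshold $C_B\nu^{1/2}N^{-(\beta+2)/(2d)}$ come out with matching constants. Since both the drift (via Lemma~\ref{converse}) and the noise (via a standard Gaussian tail) are under direct control, no appeal to the empirical concentration Lemma~\ref{central} is needed at this step.
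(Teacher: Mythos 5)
Your proof follows essentially the same route as the paper's: decompose $Y_t-Y_{t_n}$ into drift and Brownian parts, control the drift deterministically via Lemma~\ref{converse}, and bound the Brownian part by a maximal inequality followed by a union bound. The only differences are cosmetic — the paper cites the packaged estimate \cite[Lemma 2.7]{HH1} (whose $\sqrt{\Delta t}/b$ prefactor is what produces the exact power $N^{(2+\beta)/(2d)}$ after the union over the $M$ subintervals), whereas you re-derive a plain reflection-principle bound and take an explicit union over both $n$ and the $N$ particles, obtaining the cruder prefactor $NM$; as you correctly observe, since $\beta>2$ the exponential $\exp(-cN^{(\beta-2)/d})$ dominates any polynomial in $N$, so this is absorbed by a slightly smaller constant in the exponent.
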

\begin{proof}
	Notice that for $t\in[t_n,t_{n+1}]$
	\begin{align}\label{Qdiff}
Y_t-Y_{t_n}&=\int_{t_n}^{t}\int_{\RR^d}F^N\big(Y_i^s-y\big)\rho(y,s)dy\,ds+\sqrt{2\nu\Delta t}(B^t-B^{t_n})\notag \\
	&=:I_1(t)+I_2(t).
	\end{align}
	It follows from Lemma \ref{converse} that
	\begin{equation}\label{I_1}
	\sup\limits_{t_n\leq t\leq t_{n+1}}\|I_1(t)\|_\infty\leq C\Delta t\leq CN^{-\frac{\beta}{d}},
	\end{equation}
	where $C$ depending only on $T$ and $C_{\rho_0}$.
	To estimate $I_2(t)$, recall a basic property of the Brownian motion \cite[Lemma 2.7]{HH1}:
	\begin{equation}\label{Bproperty}
	\PP\left(\sup\limits_{t\leq s\leq t+\Delta t}\|B^s-B^t\|_\infty\geq b\right)\leq C_1(\sqrt{\Delta t}/b)\exp(-C_2b^2/\Delta t),
	\end{equation}
	where $C_1$ and $C_2$ depend only on $d$. Choosing $b=N^{-\frac{1}{d}}$ in \eqref{Bproperty}, it leads to 
	\begin{equation}\label{I_2}
	\PP\left(\sup\limits_{t_n\leq t\leq t_{n+1}}\|I_2(t)\|_\infty \geq \sqrt{2\nu}N^{-\frac{\beta+2}{2d}}\right)\leq C_1N^{\frac{2-\beta}{2d}}\exp(-C_2N^{\frac{\beta-2}{d}}).
	\end{equation}
	
	Collecting \eqref{I_1} and \eqref{I_2}, it yields that
	\begin{equation}
	\PP\left(\sup\limits_n\sup\limits_{t\in[t_n,t_{n+1}]}\nel Y_t-Y_{t_n}\ner_\infty\geq C\nu^{\frac{1}{2}}N^{-\frac{\beta+2}{2d}}\right)\leq  C_1N^{\frac{2+\beta}{2d}}\exp(-C_2N^{\frac{\beta-2}{d}}),
	\end{equation}
	for $\beta>2$, which concludes the proof.
\end{proof}

Now we can prove the consistency error in all time. 
\begin{proposition}\label{propconsis}(Consistency) Let $Y_t=(Y_i^t)_{i=1,\cdots,N}$ satisfies the mean-field dynamics \eqref{meandynamics} with i.i.d initial data sharing the common density $\rho_0$ satisfying \eqref{initial}.  Assume that $\mathcal F^N$ and $\overline {\mathcal{F}}^N$ be defined in \eqref{FN} and \eqref{barFN} respectively. For any $\alpha>0$ and $0<\delta\leq \frac{1}{d}$, there exist a constant $C_{2,\alpha}>0$ depending only on depends on $\alpha$, $T$ and $C_{\rho_0}$ such that
	\begin{equation}\label{consistency}
	\PP\left(\sup\limits_{t\in[0,T]}\nel \mathcal{F}^N(Y_t)-\overline {\mathcal{F}}^N(Y_t)\ner_\infty\geq C_{2,\alpha} \nu^{\frac{1}{2}} N^{\frac{\delta(d-2)-1}{2}}\log (N)\right)\leq  N^{-\alpha},
	\end{equation}
	and
	\begin{equation}\label{consistency1}
	\PP\left(\sup\limits_{t\in[0,T]}\nel \mathcal{L}^N(Y_t)-\overline {\mathcal{L}}^N(Y_t)\ner_\infty\geq C_{2,\alpha}  \nu^{\frac{1}{2}}N^{\frac{d\delta-1}{2}}\log (N)\right)\leq  N^{-\alpha}.
	\end{equation}
\end{proposition}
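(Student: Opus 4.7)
The plan is to upgrade the pointwise-in-time bounds of Lemma \ref{lmlarge} to a uniform-in-time statement via a discretize-and-interpolate argument, using Lemma \ref{lmB} to bootstrap control within each subinterval. I would proceed as follows.

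First, introduce the fine grid $t_n = n \Delta t$ with $\Delta t = N^{-\beta/d}$ for some $\beta > 2$ to be chosen large (independent of the $\Delta t$ of Theorem \ref{mainthm}), so that the grid contains only $O(N^{\beta/d})$ points up to time $T$. Applying Lemma \ref{lmlarge} at each $t_n$ with exponent $\alpha' = \alpha + \beta/d + 1$ and taking a union bound, we obtain
\begin{equation*}
\PP\Bigl(\max_n \bigl\|\mathcal{F}^N(Y_{t_n}) - \overline{\mathcal{F}}^N(Y_{t_n})\bigr\|_\infty \geq C_{1,\alpha'} N^{\frac{\delta(d-2)-1}{2}} \log N\Bigr) \leq T N^{\beta/d} \cdot N^{-\alpha'} \leq N^{-\alpha},
\end{equation*}
and analogously for $\mathcal{L}^N - \overline{\mathcal{L}}^N$. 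This handles the grid points; it remains to interpolate.

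For $t \in [t_n, t_{n+1}]$, I would split
\begin{equation*}
\mathcal{F}^N(Y_t) - \overline{\mathcal{F}}^N(Y_t) = \bigl[\mathcal{F}^N(Y_t) - \mathcal{F}^N(Y_{t_n})\bigr] + \bigl[\mathcal{F}^N(Y_{t_n}) - \overline{\mathcal{F}}^N(Y_{t_n})\bigr] + \bigl[\overline{\mathcal{F}}^N(Y_{t_n}) - \overline{\mathcal{F}}^N(Y_t)\bigr].
\end{equation*}
The middle term is handled by the discrete bound above. For the third term, since $\overline{\mathcal{F}}^N(y) = (F^N \ast \rho)(y)$ has Lipschitz constant $\|\nabla F^N \ast \rho\|_\infty \leq C \log N$ by \eqref{con2} in Lemma \ref{converse}, we get a bound $C \log(N) \|Y_t - Y_{t_n}\|_\infty$. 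For the first term, I invoke the local Lipschitz estimate of Lemma \ref{lmlip}: conditional on the event $\|Y_t - Y_{t_n}\|_\infty \leq 2 N^{-\delta}$ (which by Lemma \ref{lmB} holds with probability at least $1 - C_B N^{(2+\beta)/(2d)} \exp(-C_B N^{(\beta-2)/d})$ once $\beta$ is large enough that $N^{-(\beta+2)/(2d)} \leq N^{-\delta}$), the first bracket is at most $C \|\mathcal{L}^N(Y_{t_n})\|_\infty \|Y_t - Y_{t_n}\|_\infty$. Using the already-established discrete bound for $\mathcal{L}^N - \overline{\mathcal{L}}^N$ together with $\|\overline{\mathcal{L}}^N(Y_{t_n})\|_\infty \leq C \log N$ from \eqref{con1}, we get $\|\mathcal{L}^N(Y_{t_n})\|_\infty \leq C(\log N + N^{(d\delta-1)/2} \log N) \leq C \log N$ (since $d\delta \leq 1$). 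Thus the interpolation error is at most $C \log(N) \cdot C_B \nu^{1/2} N^{-(\beta+2)/(2d)}$, which by choosing $\beta$ large enough is far smaller than the target $\nu^{1/2} N^{(\delta(d-2)-1)/2} \log N$, and the failure probability from Lemma \ref{lmB} is stretched-exponentially small in $N$, hence absorbable into $N^{-\alpha}$.

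For \eqref{consistency1} involving $\mathcal{L}^N$ the strategy is identical, with one wrinkle: $L^N$ is only piecewise smooth, so there is no clean analogue of Lemma \ref{lmlip}. However, away from the ball of radius $6 N^{-\delta}$ the function $L^N$ is $C^\infty$ with gradient bounded by $C N^{(d+1)\delta}$, and inside that ball $L^N$ is constant equal to $N^{d\delta}$; a direct computation using $\|Y_t - Y_{t_n}\|_\infty \leq \nu^{1/2} N^{-(\beta+2)/(2d)}$ from Lemma \ref{lmB} therefore yields $\|\mathcal{L}^N(Y_t) - \mathcal{L}^N(Y_{t_n})\|_\infty \leq C N^{(d+1)\delta} \cdot \nu^{1/2} N^{-(\beta+2)/(2d)}$, which again can be made negligible compared to $N^{(d\delta-1)/2} \log N$ by taking $\beta$ sufficiently large. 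The smoothness of $\overline{\mathcal{L}}^N$ comes from convolution with $\rho \in L^1 \cap L^\infty$. The main technical obstacle I expect is precisely this interpolation step for $\mathcal{L}^N$ near the cutoff radius, where one must check carefully that the discontinuity in the derivative does not spoil the bound — but since the jump in $L^N$ across $|x|=6N^{-\delta}$ is bounded by $N^{d\delta}$ times a dimensionless constant, the same argument carries through.
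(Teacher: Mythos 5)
Your proposal is correct and follows essentially the same route as the paper's own proof: discretize time on a fine grid of spacing $N^{-\beta/d}$, apply the fixed-time Lemma \ref{lmlarge} at each grid point with a union bound, and interpolate between grid points using the local Lipschitz bound of Lemma \ref{lmlip}, the bound $\|\mathcal{L}^N(Y_{t_n})\|_\infty \lesssim \log N$ obtained from $\B_{t_n}$, the bound $\|\nabla F^N\ast\rho\|_\infty\lesssim\log N$ from Lemma \ref{converse}, and the Brownian modulus of continuity from Lemma \ref{lmB}, absorbing the grid-count factor into the exponent $\alpha$. You are in fact more explicit than the paper on the $\mathcal{L}^N$ case, which the paper dispatches with a ``similarly'' remark; your piecewise-gradient bound $|\nabla L^N|\lesssim N^{(d+1)\delta}$ gives the correct interpolation estimate there.
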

\begin{proof}
Denote events:
\begin{equation}\label{eventH}
\mathcal{H}:=\left\{\sup\limits_n\sup\limits_{t\in[t_n,t_{n+1}]}\nel Y_t-Y_{t_n}\ner_\infty\leq C_B\nu^{\frac{1}{2}}N^{-\frac{\beta+2}{2d}}\right\},
\end{equation}
and
\begin{equation}
\mC_{t_n}:=\left\{\nel \mathcal{F}^N(Y_{t_n})-\overline {\mathcal{F}}^N(Y_{t_n})\ner_\infty\geq C_{1,\alpha} N^{\frac{\delta(d-2)-1}{2}}\log (N)\right\},
\end{equation}
where $C_B$ and $C_{1,\alpha} $ are used in Lemma \ref{lmlarge} and Lemma \ref{lmB} respectively.
According to the Lemma \ref{lmlarge} and Lemma \ref{lmB}, one has
\begin{equation}
\PP(\mC_{t_n}^c)\leq N^{-\alpha},\quad \PP(\mathcal{H}^c)\leq C_BN^{\frac{2+\beta}{2d}}\exp(-C_BN^{\frac{\beta-2}{d}}).
\end{equation}
for any $\alpha>0$ and $\beta>2$.

Furthermore,  we denote
\begin{equation}\label{Btn}
\B_{t_n}:=\left\{\nel \mathcal{L}^N(Y_{t_n})-\overline{\mathcal{L}}^N(Y_{t_n})\ner_\infty\leq C_{1,\alpha} N^{\frac{d\delta-1}{2}}\log (N)\right\},
\end{equation}
then under the event $\B_{t_n}$, it holds that
\begin{equation}\label{Btnresult}
\|\mathcal{L}^N(Y_{t_n})\|_\infty\leq \|\overline {\mathcal{L}}^N(Y_{t_n})\|_\infty+C_{1,\alpha} N^{\frac{d\delta-1}{2}}\log (N)\leq C(\alpha, T, C_{\rho_0})\log (N).
\end{equation}
and $\PP(\B_{t_n}^c)\leq N^{-\alpha}$ by Lemma \ref{lmlarge}.

For all $t\in[t_n,t_{n+1}]$, under the event $\B_{t_n}\cap\mC_{t_n}\cap \mathcal{H}$, we obtain
\begin{align*}
&\nel \mathcal{F}^N(Y_t)-\overline {\mathcal{F}}^N(Y_t)\ner_\infty\notag\\
\leq &\nel \mathcal{F}^N(Y_t)-{\mathcal{F}}^N(Y_{t_n})\ner_\infty+\nel \mathcal{F}^N(Y_{t_n})-\overline {\mathcal{F}}^N(Y_{t_n})\ner_\infty+\nel \overline {\mathcal{F}}^N(Y_{t_n})-\overline {\mathcal{F}}^N(Y_t)\ner_\infty\notag\\
\leq& C\|\mathcal{L}^N(Y_{t_n})\|_\infty\|Y_t-Y_{t_n}\|_\infty+C_{1,\alpha} N^{\frac{\delta(d-2)-1}{2}}\log (N)+C\log(N)\nel Y_{t}-Y_{t_n}\ner_\infty\notag\\
\leq&C(\alpha, T, C_{\rho_0}) \nu^{\frac{1}{2}}\log (N)N^{-\frac{\beta+2}{2d}}+C_{1,\alpha} N^{\frac{\delta(d-2)-1}{2}}\log (N)\notag\\
\leq&C(\alpha, T, C_{\rho_0})  \nu^{\frac{1}{2}}N^{\frac{\delta(d-2)-1}{2}}\log (N),\quad(\beta>(d-2)(1-d\delta))
\end{align*}
where in the second inequality we have used the local Lipschitz bound of $\mathcal{F}^N$
\begin{equation}
\nel \mathcal{F}^N(Y_t)-\mathcal{F}^N(Y_{t_n})\ner_\infty\leq C\|\mathcal{L}^N(Y_{t_n})\|_\infty\|Y_t-Y_{t_n}\|_\infty,
\end{equation}
under the event $\mathcal{H}$ (see in Lemma \ref{lmlip}).
It yields that
\begin{equation}
\sup\limits_{t\in[0,T]}\nel \mathcal{F}^N(Y_t)-\overline {\mathcal{F}}^N(Y_t)\ner_\infty\leq C(\alpha, T, C_{\rho_0})  \nu^{\frac{1}{2}}N^{\frac{\delta(d-2)-1}{2}}\log (N),
\end{equation}
holds under the event $\bigcap\limits_{n=0}^{M-1}(\B_{t_n}\cap\mC_{t_n})\cap\mathcal{H}$. Therefore
\begin{align}\label{68}
&\PP\left(\sup\limits_{t\in[0,T]}\nel \mathcal{F}^N(Y_t)-\overline {\mathcal{F}}^N(Y_t)\ner_\infty\geq C(\alpha, T, C_{\rho_0}) \nu^{\frac{1}{2}}N^{\frac{\delta(d-2)-1}{2}}\log (N)\right)\notag \\
\leq &\sum\limits_{n=0}^{M-1}P(\B_{t_n}^c)+\sum\limits_{n=0}^{M-1}P(\mC_{t_n}^c)+P(\mathcal{H}^c)\notag \\
\leq &TN^{-\frac{d\alpha-\beta}{d}}+TN^{-\frac{d\alpha-\beta}{d}}+C_BN^{\frac{2+\beta}{2d}}\exp(-C_BN^{\frac{\beta-2}{d}})
\leq N^{-\alpha'}.
\end{align}
Denote $C_{2,\alpha'}$ to be the constant $C(\alpha, T, C_{\rho_0})$ in \eqref{68}. Since $\alpha>0$ is arbitrary and so is $\alpha'$, hence \eqref{consistency} holds true. The proof of \eqref{consistency1} can be done similarly.
\end{proof}

In order to prove the convergence, we still need the stability result which states:
\begin{proposition}\label{propstab}
	(Stability)   Assume that trajectories $X_t=(X_i^t)_{i=1,\cdots,N}$, $Y_t=(Y_i^t)_{i=1,\cdots,N}$ satisfy \eqref{particl system} and \eqref{meandynamics} respectively with the initial data $X_0=Y_0$, which are i.i.d. sharing the common density $\rho_0$ satisfying \eqref{initial}. Let  events $\mathcal{B}_{t_n}$ and $\mathcal{H}$ be defined in \eqref{Btn} and  \eqref{eventH} respectively, $\mathcal F^N$ be defined in \eqref{FN} . Denote events:
	\begin{equation}\label{eventA}
	\A:=\left\{\sup\limits_{t\in[0,T]}\nel X_t-Y_t\ner_\infty< N^{-\delta}\right\},
	\end{equation}
	and
	\begin{equation*}
	\mathcal{S}(\Lambda):=\bigg\{\| \mathcal{F}^N(X_t)-{\mathcal{F}}^N(Y_t) \|_\infty \leq \Lambda\log(N)\nel X_t-Y_t\ner_\infty+\Lambda \nu^{\frac{1}{2}}\log(N)N^{-\frac{\beta+2}{2d}},~\forall~t\in[0,T]\bigg\}.
	\end{equation*}
For any $\alpha>0$, there exists some $C_{3,\alpha}>0$ depending only on $\alpha$, $T$ and $C_{\rho_0}$ such that
	\begin{align*}
  \bigcap\limits_{n=0}^{M-1}\B_{t_n}\cap \A\cap \mathcal{H}\subset \mathcal{S}(C_{3,\alpha}).
	\end{align*}
Here the event $\mathcal{S}(C_{3,\alpha})$ can be seen as the stability result and the events $\B_{t_n}$, $\A$ and $\mathcal{H}$ can be treated  as the stability conditions.

\end{proposition}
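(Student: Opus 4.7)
The plan is a triangle-inequality argument that transfers the Lipschitz estimate at an arbitrary time $t$ back to the nearest grid time $t_n$, where $\B_{t_n}$ controls $\mathcal{L}^N(Y_{t_n})$. Fix $t\in[0,T]$ and choose $n$ so that $t\in[t_n,t_{n+1}]$. I would insert $Y_{t_n}$ as an intermediate point and write
\begin{equation*}
\|\mathcal{F}^N(X_t)-\mathcal{F}^N(Y_t)\|_\infty\leq \|\mathcal{F}^N(X_t)-\mathcal{F}^N(Y_{t_n})\|_\infty+\|\mathcal{F}^N(Y_{t_n})-\mathcal{F}^N(Y_t)\|_\infty,
\end{equation*}
and then bound each summand by Lemma \ref{lmlip} with the common base point $Y_{t_n}$. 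This localizes the Lipschitz factor on $\|\mathcal{L}^N(Y_{t_n})\|_\infty$, which is precisely the quantity $\B_{t_n}$ controls.

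First I would verify the admissibility of Lemma \ref{lmlip}. Under $\A$ we have $\|X_t-Y_t\|_\infty<N^{-\delta}$, and under $\mathcal{H}$ we have $\|Y_t-Y_{t_n}\|_\infty\leq C_B\nu^{\frac{1}{2}}N^{-(\beta+2)/(2d)}$. Since $\beta>2$ and $\delta\leq 1/d$ imply $(\beta+2)/(2d)>\delta$, for $N$ larger than some $N_0$ (depending on $\nu$, $C_B$, $\beta$, $\delta$, $d$) the bound $\|Y_t-Y_{t_n}\|_\infty\leq N^{-\delta}$ holds, hence also $\|X_t-Y_{t_n}\|_\infty\leq 2N^{-\delta}$. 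Therefore Lemma \ref{lmlip} applies to both pairs $(X_t,Y_{t_n})$ and $(Y_t,Y_{t_n})$ and yields
\begin{equation*}
\|\mathcal{F}^N(X_t)-\mathcal{F}^N(Y_{t_n})\|_\infty\leq C\|\mathcal{L}^N(Y_{t_n})\|_\infty\|X_t-Y_{t_n}\|_\infty,
\end{equation*}
\begin{equation*}
\|\mathcal{F}^N(Y_t)-\mathcal{F}^N(Y_{t_n})\|_\infty\leq C\|\mathcal{L}^N(Y_{t_n})\|_\infty\|Y_t-Y_{t_n}\|_\infty.
\end{equation*}

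Next I would plug in the bounds from $\B_{t_n}$ and $\mathcal{H}$. On $\B_{t_n}$, the estimate \eqref{Btnresult} already observed in the proof of Proposition \ref{propconsis} gives $\|\mathcal{L}^N(Y_{t_n})\|_\infty\leq C(\alpha,T,C_{\rho_0})\log(N)$. Combining the two Lipschitz bounds with the triangle inequality $\|X_t-Y_{t_n}\|_\infty\leq \|X_t-Y_t\|_\infty+\|Y_t-Y_{t_n}\|_\infty$ and then absorbing $\|Y_t-Y_{t_n}\|_\infty$ via $\mathcal{H}$, I obtain
\begin{align*}
\|\mathcal{F}^N(X_t)-\mathcal{F}^N(Y_t)\|_\infty
&\leq C\|\mathcal{L}^N(Y_{t_n})\|_\infty\bigl(\|X_t-Y_t\|_\infty+2\|Y_t-Y_{t_n}\|_\infty\bigr)\\
&\leq C(\alpha,T,C_{\rho_0})\log(N)\|X_t-Y_t\|_\infty+C(\alpha,T,C_{\rho_0})\nu^{\frac{1}{2}}\log(N)N^{-\frac{\beta+2}{2d}}.
\end{align*}
Renaming the prefactor to $C_{3,\alpha}$ gives exactly the event $\mathcal{S}(C_{3,\alpha})$, uniformly in $t\in[0,T]$ since the choice of $n$ depended only on $t$.

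The only genuinely delicate step is the admissibility check: one must track the interplay between $\delta$, $\beta$, $d$ and $\nu$ to guarantee $\|X_t-Y_{t_n}\|_\infty\leq 2N^{-\delta}$ uniformly in $t$. Once that is secured, the rest is a routine triangle-inequality assembly that never needs information about $X_t$ at off-grid times beyond what the event $\A$ itself already encodes, and the only probabilistic input (the logarithmic bound on $\|\mathcal{L}^N(Y_{t_n})\|_\infty$) has already been harvested inside $\B_{t_n}$.
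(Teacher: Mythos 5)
Your proof is correct and follows essentially the same route as the paper: insert $Y_{t_n}$ as the intermediate point, apply Lemma \ref{lmlip} with base point $Y_{t_n}$ after checking $\|X_t-Y_{t_n}\|_\infty\leq 2N^{-\delta}$ and $\|Y_t-Y_{t_n}\|_\infty\leq N^{-\delta}$ under $\A\cap\mathcal{H}$, and then use \eqref{Btnresult} on $\B_{t_n}$ to bound $\|\mathcal{L}^N(Y_{t_n})\|_\infty$ by $C\log(N)$. Your explicit largeness-of-$N$ check for absorbing $C_B\nu^{1/2}$ is, if anything, slightly more careful than the paper's parenthetical condition $\beta>2d\delta-2$, and is consistent with the $N\geq N_0$ threshold appearing in Theorem \ref{mainthm}.
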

\begin{proof}
	
	First, we split $\mathcal{S}(\Lambda)$ into the union of non-overlapping sets $\{\mathcal{S}_n(\Lambda)\}_{n=0}^{M-1}(\Lambda)$, where
	\begin{align*}
	\mathcal{S}_n(\Lambda):=\bigg\{\| \mathcal{F}^N(X_t)-\mathcal{F}^N(Y_t) \|_\infty& \leq \Lambda\log(N)\nel X_t-Y_t\ner_\infty\notag \\
&\quad	+\Lambda\log(N)N^{-\frac{\beta+2}{2d}},~\forall~t\in[t_n,t_{n+1}]\bigg\}.
	\end{align*}
	Notice that for any $t\in[t_n,t_{n+1}]$, under the event $\A\cap\mathcal{H}$, one has
		\begin{align*}
	\sup\limits_{t\in[t_n,t_{n+1}]}\nel X_t-Y_{t_n}\ner_\infty&\leq \sup\limits_{t\in[t_n,t_{n+1}]}\nel X_t-Y_{t}\ner_\infty+\sup\limits_{t\in[t_n,t_{n+1}]}\nel Y_t-Y_{t_n}\ner_\infty\notag \\
	&\leq N^{-\delta}+C_B \nu^{\frac{1}{2}}N^{-\frac{\beta+2}{2d}}< 2N^{-\delta},\quad (\beta>2d\delta-2)
	\end{align*}
	and
	\begin{align*}
	\sup\limits_{t\in[t_n,t_{n+1}]}\nel Y_t-Y_{t_n}\ner_\infty<C_B \nu^{\frac{1}{2}}N^{-\frac{\beta+2}{2d}}<N^{-\delta}.\quad (\beta>2d\delta-2)
	\end{align*}
	Then applying the local Lipschitz bound of $F^N$ (see in Lemma \ref{lmlip}) leads to
	\begin{align*}
	\| \mathcal{F}^N(X_t)-\mathcal{F}^N(Y_t) \|_\infty &\leq \| \mathcal{F}^N(X_t)-\mathcal{F}^N(Y_{t_n}) \|_\infty +\| \mathcal{F}^N(Y_{t_n})-\mathcal{F}^N(Y_{t}) \|_\infty\notag\\
	&\leq  C\|\mathcal{L}^N(Y_{t_n})\|_\infty\left(\nel X_t-Y_{t_n}\ner_\infty+\nel Y_{t_n}-Y_{t}\ner_\infty\right)\notag\\
	&\leq C\|\mathcal{L}^N(Y_{t_n})\|_\infty\nel X_t-Y_t\ner_\infty+2C\|\mathcal{L}^N(Y_{t_n})\|_\infty\nel Y_{t_n}-Y_{t}\ner_\infty
	\end{align*}
under the event $\A\cap\mathcal{H}$.

Furthermore,  under the event $\B_{t_n}$, it follows from \eqref{Btnresult} that
	\begin{equation}
	\|\mathcal{L}^N(Y_{t_n})\|_\infty\leq C\log(N),
	\end{equation}	
	Hence, for all $t\in[t_n,t_{n+1}]$ one has
	\begin{align*}
	\| \mathcal{F}^N(X_t)-\mathcal{F}^N(Y_t) \|_\infty &\leq C(\alpha, T, C_{\rho_0})\log(N)\nel X_t-Y_t\ner_\infty\\
	&\quad+C(\alpha, T, C_{\rho_0}) \nu^{\frac{1}{2}}\log(N)N^{-\frac{\beta+2}{2d}},
	\end{align*}
	under event $\A\cap\mathcal{H}\cap\B_{t_n}$. Denote the $C(\alpha, T, C_{\rho_0})$ in the above as $C_{3,\alpha}$. This implies $\A\cap\mathcal{H}\cap\B_{t_n}\subset \mathcal{S}_n(C_{3,\alpha})$,
	 which yields $$\bigcap\limits_{n=0}^{M-1}\B_{t_n}\cap\mathcal{H}\cap \A\subset \mathcal{S}(C_{3,\alpha}).$$
	 	Thus, the proposition has been proved.

\end{proof}

 Before proving the result on mean-field limit,  let us recall a Gronwall-type inequality in \cite{vlasovhui}.
\begin{lem}\label{lmgron}
	For any $T>0$, let $e(t)$ be a non-negative continuous function on $[0,T]$ with the initial data $e(0)=0$ and $\lambda,~\delta$ be two universal constants satisfying $0<\delta<\lambda$. Assume that for any $0< T_1\leq T$ the function $e(t)$ satisfies the following differential inequality holds with $C>0$ independent of $N>0$
	\begin{equation}\label{priorineq}
	\frac{de(t)}{dt}\leq C\log(N)e(t)+C\log(N)N^{-\lambda},\quad 0<t\leq T_1,
	\end{equation}
	provided that 
	\begin{equation}\label{priorcon}
	\sup\limits_{t\in[0,T_1]}e(t)\leq N^{-\delta},
	\end{equation}
	holds.  Then $e(t)$ is uniformly bounded on $[0,T]$. Furthermore there is a $N_0\in\mathbb{N}$ depending only on $C$ and $T$  such that for all $N\geq N_0$
	\begin{equation}\label{priorres}
	\sup\limits_{t\in[0,T]}e(t)\leq  N^{-\delta}.
	\end{equation}
\end{lem}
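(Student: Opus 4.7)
The plan is a continuity / bootstrap argument driven by classical Gronwall's inequality. First I would define
\[
T^\star := \sup\Bigl\{ t \in [0,T] : \sup_{s \in [0,t]} e(s) \leq N^{-\delta} \Bigr\}.
\]
Since $e(0)=0 < N^{-\delta}$ and $e$ is continuous, we have $T^\star > 0$ and, by closedness of the sublevel set, $e(T^\star) \leq N^{-\delta}$. The aim is to show that for $N$ large enough (depending on $C$ and $T$) one must in fact have $T^\star = T$.

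On the interval $[0, T^\star]$ the condition (\ref{priorcon}) is satisfied by construction, so the differential inequality (\ref{priorineq}) is available there. Introducing $f(t) := e(t) + N^{-\lambda}$ converts (\ref{priorineq}) into the homogeneous form $f'(t) \leq C\log(N)\, f(t)$ with $f(0) = N^{-\lambda}$, and the scalar Gronwall lemma yields
\[
e(t) \leq N^{-\lambda}\bigl(e^{C\log(N) t} - 1\bigr) = N^{-\lambda}\bigl(N^{Ct} - 1\bigr) \leq N^{Ct-\lambda}, \qquad t \in [0, T^\star].
\]

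I would then close the bootstrap by contradiction. If $T^\star < T$, maximality together with continuity of $e$ forces $e(T^\star) = N^{-\delta}$; combined with the Gronwall bound above this gives $N^{-\delta} \leq N^{CT^\star - \lambda} \leq N^{CT - \lambda}$. Choosing $N_0$ large enough that this inequality is strictly violated on $[0,T]$ yields a contradiction, so $T^\star = T$ and (\ref{priorres}) follows.

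The main subtlety lies in this closing step: one has to pick $N_0$ (in terms of $C$ and $T$) so that the explicit Gronwall bound $N^{CT-\lambda}$ is strictly dominated by $N^{-\delta}$ for all $N \geq N_0$, which is what actually breaks the bootstrap and forces $T^\star = T$. Everything else---continuity of $e$ to produce the bootstrap interval, and the substitution $f = e + N^{-\lambda}$ reducing the inhomogeneous Gronwall to the homogeneous one---is entirely routine.
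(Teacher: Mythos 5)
Your bootstrap set-up and the Gronwall step are fine and follow essentially the same route as the paper, which also argues by contradiction with $T_*=\inf\{t\in[0,T]:e(t)\geq N^{-\delta}\}$ and then applies Gronwall on $[0,T_*]$. The genuine gap is in your closing step. From $e(T^\star)=N^{-\delta}$ and your bound $e(T^\star)\leq N^{CT^\star-\lambda}\leq N^{CT-\lambda}$ you conclude that one can ``choose $N_0$ large enough that this inequality is strictly violated.'' But both sides are fixed powers of $N$: the inequality $N^{-\delta}\leq N^{CT-\lambda}$ fails for large $N$ if and only if $CT-\lambda<-\delta$, i.e.\ $CT<\lambda-\delta$. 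The lemma assumes only $0<\delta<\lambda$ with $C,T$ arbitrary, so in general $CT-\lambda\geq -\delta$, in which case $N^{-\delta}\leq N^{CT-\lambda}$ holds for \emph{every} $N$ and no contradiction is reached; enlarging $N$ only widens the gap. Concretely, with the coefficient $C\log(N)$ in \eqref{priorineq} the Gronwall factor is $e^{C\log(N)t}=N^{Ct}$, a genuine positive power of $N$, and your argument closes only under the unstated smallness condition $CT<\lambda-\delta$ (indeed $e(t)=N^{-\lambda}(N^{Ct}-1)$ satisfies the hypotheses but violates \eqref{priorres} when $CT>\lambda-\delta$).

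It is worth seeing where this friction comes from by comparing with the paper's own proof: there the Gronwall step is run (following the cited source) with a bound of the form $e(T_*)\leq e^{C\sqrt{\log(N)}T_*}\log^2(N)N^{-\lambda}$, whose prefactor $e^{C\sqrt{\log(N)}T_*}\log^2(N)$ is sub-polynomial in $N$ and is therefore eventually dominated by $N^{\lambda-\delta}$, so the ``take $N\geq N_0(C,T)$'' step does close. That argument, however, uses a $\sqrt{\log(N)}$-type coefficient rather than the $C\log(N)$ appearing in \eqref{priorineq}, so your difficulty is not a careless slip on your part but exactly the point where a literal reading of the stated hypothesis does not support the final absorption. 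To repair your write-up you must either add the restriction $CT<\lambda-\delta$ (equivalently, small $T$ or small $C$), or work with a hypothesis in which the coefficient of $e(t)$ grows slower than $\log N$ (e.g.\ $C\sqrt{\log N}$), so that the Gronwall exponential is beaten by the polynomial margin $N^{\lambda-\delta}$ for $N$ large.
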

\begin{proof}
	This lemma has been proved in \cite[Lemma 3.3]{vlasovhui}. For completeness, we provide a proof here, which is done by contradiction. We assume that there is a $t\in [0,T]$ with $e(t)\geq N^{-\lambda_2}$ and show that for $N\geq N_0$ with some $N_0\in\mathbb{N}$ specified below, we get a contradiction. 
	
	It follows that the infimum over all times $t$ where $e(t)$ is larger than or equal to  $N^{-\lambda_2}$ exists and we define  $$T_*=\inf_{}\{0\leq t \leq T:e(t)\geq N^{-\lambda_2}\}.$$
	We get by  continuity of $e(t)$ together with $e(0)=0$ that $T^*>0$,
	\begin{equation}\label{contra}e(T_*)=N^{-\lambda_2}\text{ and }\max_{0\leq t \leq T_*}e(t)=N^{-\lambda_2}\;.
	\end{equation}
	
	Since  \eqref{priorcon} implies \eqref{priorres}, we get for $T_1=T_*$ that
	
	$$\frac{de(t)}{dt}\leq C\sqrt{\log(N)}e(t)+C\log^2(N)N^{-\lambda_3},\quad 0<t\leq T_*.$$
	Gronwall's Lemma gives that
	$$e(t)\leq e^{C\sqrt{\log(N)}t}\log^2(N)N^{-\lambda_3},$$
	in particular $$e(T_*)\leq e^{C\sqrt{\log(N)}T_*}\log^2(N)N^{-\lambda_3}.$$
	
	Since $ e^{C\sqrt{\log(N)}T_*}$ and $\log^2(N)$ are  asymptotically bounded by any positive power of $N$, we can find a $N_0\in\mathbb{N}$  depending only on $C$ and $T_*$ such that for any $N\geq N_0$
	$$ e^{C\sqrt{\log(N)}T_*}\log^2(N)< N^{\lambda_3-\lambda_2},\quad \mbox{ for }0<\lambda_2<\lambda_3,$$
	and hence
	$$e(T_*)< N^{-\lambda_2}\text{ for any }N\geq N_0\:.$$
	
	Thus  we get a contradiction to \eqref{contra} for all $N\geq N_0$ and the lemma is proven.
\end{proof}

Our next theorem states that the $N$-particle trajectory $X_t=(X_i^t)_{i=1,\cdots,N}$ starting from $X_0$ (i.i.d. with common density $\rho_0$) remains close to the mean-field trajectory $Y_t=(Y_i^t)_{i=1,\cdots,N}$ with the same initial configuration $Y_0=X_0$. More precisely, we prove that the measure of the set where the maximal distance $\sup\limits_{t\in[0,T]}\norm{X_t-Y_t}_\infty$ on $[0,T]$ excedes $N^{-\delta}$ decreases exponentially with the number of particles $N$, as $N$ grows to infinity.
\begin{thm}\label{thmmean}(Convergence)
	Assume that trajectories $X_t=(X_i^t)_{i=1,\cdots,N}$, $Y_t=(Y_i^t)_{i=1,\cdots,N}$ satisfy \eqref{particl system} and \eqref{meandynamics} respectively with the initial data $X_0=Y_0$, which is i.i.d. sharing the common density $\rho_0$ satisfying \eqref{initial}.  Then for any $\alpha>0$, there exist some constant $N_0>0$ depending only on $\nu$, $\alpha$, $T$ and $C_{\rho_0}$, such that for $N\geq N_0$, the following estimate holds with the cut-off index $0<\delta<\frac{1}{d}$
	$$\mathbb{P}\left(\sup\limits_{t\in[0,T]}\norm{X_t-Y_t}_\infty \leq N^{-\delta} \right)\geq 1-N^{-\alpha}.$$
\end{thm}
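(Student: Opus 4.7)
The plan is to couple the two systems through their shared initial data and Brownian motions so that the diffusion contributions cancel exactly, reducing the analysis to a pathwise differential inequality for $e(t) := \|X_t - Y_t\|_\infty$. Subtracting \eqref{meandynamics} from \eqref{particl system} gives, for each $i$,
\begin{equation*}
X_i^t - Y_i^t = \int_0^t \bigl((\mathcal{F}^N(X_s))_i - (\overline{\mathcal{F}}^N(Y_s))_i\bigr)\,ds,
\end{equation*}
with no stochastic integral remaining. Decomposing the integrand as
\begin{equation*}
\mathcal{F}^N(X_s) - \overline{\mathcal{F}}^N(Y_s) = \bigl(\mathcal{F}^N(X_s) - \mathcal{F}^N(Y_s)\bigr) + \bigl(\mathcal{F}^N(Y_s) - \overline{\mathcal{F}}^N(Y_s)\bigr)
\end{equation*}
identifies a stability term, already treated by Proposition \ref{propstab}, and a consistency term, already treated by Proposition \ref{propconsis}.

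Next I would introduce the good event
\begin{equation*}
\Omega_\star := \mathcal{H} \cap \bigcap_{n=0}^{M-1} \B_{t_n} \cap \Bigl\{\sup_{t \in [0,T]}\|\mathcal{F}^N(Y_t) - \overline{\mathcal{F}}^N(Y_t)\|_\infty \leq C_{2,\alpha}\nu^{1/2} N^{(\delta(d-2)-1)/2}\log N\Bigr\}
\end{equation*}
and bound $\PP(\Omega_\star^c) \leq C N^{-\alpha'}$ for any prescribed $\alpha' > 0$ by combining Lemma \ref{lmB}, the tail estimate \eqref{largel} summed over the $M = T N^{\beta/d}$ snapshots (whose polynomial factor is absorbed by a small shift in $\alpha$), and the uniform-in-time estimate \eqref{consistency}. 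On $\Omega_\star$, whenever the prior smallness $\sup_{t \in [0,T_1]} e(t) \leq N^{-\delta}$ holds on some $[0,T_1]$, the event $\A$ restricted to $[0,T_1]$ is satisfied, so Proposition \ref{propstab} yields
\begin{equation*}
\|\mathcal{F}^N(X_s) - \mathcal{F}^N(Y_s)\|_\infty \leq C_{3,\alpha}\log(N)\, e(s) + C_{3,\alpha}\nu^{1/2}\log(N)\, N^{-(\beta+2)/(2d)}
\end{equation*}
for all $s \in [0,T_1]$. Combining with the consistency bound from $\Omega_\star$ and differentiating the integral representation of $e(t)$ produces
\begin{equation*}
\frac{de(t)}{dt} \leq C\log(N)\, e(t) + C\nu^{1/2}\log(N)\, N^{-\lambda}, \quad 0 < t \leq T_1,
\end{equation*}
with $\lambda := \min\bigl(\tfrac{1-\delta(d-2)}{2},\,\tfrac{\beta+2}{2d}\bigr)$; since $\delta < 1/d$ forces $\tfrac{1-\delta(d-2)}{2} > \delta$, one may fix $\beta > 2$ large enough that $\lambda > \delta$.

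Finally I would invoke the Gronwall-type Lemma \ref{lmgron}, which is tailored precisely to this conditional structure: the differential inequality is only available under the prior bound $e \leq N^{-\delta}$, and the lemma promotes this to $\sup_{t \in [0,T]} e(t) \leq N^{-\delta}$ via a continuity/contradiction argument, for all $N \geq N_0$ with $N_0$ depending on $\nu, \alpha, T$ and $C_{\rho_0}$. Since this conclusion holds deterministically on $\Omega_\star$, taking probabilities and relabeling $\alpha$ yields the theorem. The main obstacle is the chicken-and-egg coupling between stability and the target bound: one needs the distance already to be $\leq N^{-\delta}$ to invoke the local Lipschitz behavior of $\mathcal{F}^N$ at the Newtonian scale, which is exactly what is being proved. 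The Gronwall-type lemma absorbs this bootstrap, but balancing the exponents so that $\lambda > \delta$ while simultaneously keeping the probability tails from Lemma \ref{lmB} and from the union over the $M$ snapshots under control requires careful bookkeeping.
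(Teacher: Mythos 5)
Your proposal is correct and follows essentially the same route as the paper: the same pathwise cancellation of the Brownian parts, the same good event $\mathcal{H}\cap\bigcap_n\B_{t_n}\cap\{\text{consistency}\}$ built from Lemma \ref{lmB} and Propositions \ref{propconsis}--\ref{propstab}, the same conditional differential inequality for $e(t)=\|X_t-Y_t\|_\infty$, and the same bootstrap via the Gronwall-type Lemma \ref{lmgron} with the exponent check $\tfrac{1-\delta(d-2)}{2}>\delta$ for $\delta<\tfrac1d$. The only cosmetic difference is that you keep the $N^{-(\beta+2)/(2d)}$ term inside a minimum defining $\lambda$, whereas the paper absorbs it into the leading $N^{(\delta(d-2)-1)/2}$ term by taking $\beta$ large; these are equivalent.
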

\begin{proof}
	We can prove the convergence result by using the consistency from Proposition \ref{propconsis}, the stability from Proposition \ref{propstab} and Lemma \ref{lmgron}.
Denote the event
\begin{equation}
\mathcal{C} :=\left\{\sup\limits_{t\in[0,T]}\nel \mathcal{F}^N(Y_t)-\overline {\mathcal{F}}^N(Y_t)\ner_\infty\leq C_{2,\alpha}  \nu^{\frac{1}{2}}N^{\frac{\delta(d-2)-1}{2}}\log (N)\right\}.
\end{equation}
Consider the quantity $e(t)$ defined as
\begin{equation}
e(t):=\nel X_t-Y_t\ner_\infty.
\end{equation}
Computing under the event $\mathcal{C}\cap \mathcal{S}(C_{3,\alpha})$ and using the fact $\frac{d\|x\|_{\infty}}{dt}\leq \|\frac{dx}{dt}\|_{\infty}$, one has
\begin{align}\label{et}
\frac{d e(t)}{dt}&\leq\nel \mathcal{F}^N(X_t)-\overline {\mathcal{F}}^N(Y_t)\ner_\infty \notag \\
&\leq \nel \mathcal{F}^N(X_t)- {\mathcal{F}}^N(Y_t)\ner_\infty +\nel \mathcal{F}^N(Y_t)-\overline {\mathcal{F}}^N(Y_t)\ner_\infty \notag\\
&\leq C_{3,\alpha}\log(N)\nel X_t-Y_t\ner_\infty+C_{3,\alpha} \nu^{\frac{1}{2}}\log(N)N^{-\frac{\beta+2}{2d}}+C_{2,\alpha}  \nu^{\frac{1}{2}}N^{\frac{\delta(d-2)-1}{2}}\log (N)\notag \\
&\leq C(\alpha, T, C_{\rho_0})\log(N)e(t)+C(\alpha, T, C_{\rho_0}) \nu^{\frac{1}{2}}N^{\frac{\delta(d-2)-1}{2}}\log (N).
\end{align}
According to Proposition \ref{propstab} one has
\begin{equation}\label{93}
\mathcal{C}\cap\bigcap\limits_{n=0}^{M-1}\B_{t_n}\cap\mathcal{H}\cap \A \subset\mathcal{C} \cap \mathcal{S}(C_{3,\alpha}).
\end{equation}

Thus it follows from \eqref{et} that for any $0<T_1\leq T$, it holds
\begin{equation}
\frac{d e(t)}{dt}\leq C\log (N)e(t)+C(\alpha, T, C_{\rho_0}) \nu^{\frac{1}{2}}N^{-\lambda}\log (N),\quad \mbox{for all } t\in(0,T_1],
\end{equation}
under the event $\mathcal{C}\cap\bigcap\limits_{n=0}^{M-1}\B_{t_n}\cap\mathcal{H}\cap \A$, where 
\begin{equation}
-\lambda:=\frac{\delta(d-2)-1}{2}.
\end{equation}
And for $0<\delta<\frac{1}{3}$ we have $-\lambda<-\delta$.

Recall the event
\begin{equation}
\mathcal{A}:=\left\{\sup\limits_{t\in[0,T]}e(t)\leq N^{-\delta}\right\}\subseteq \left\{\sup\limits_{t\in[0,T_1]}e(t)\leq N^{-\delta}, \mbox{ for any }0<T_1\leq T\right\}.
\end{equation}
We deliberately  take the event $\A$ out as the condition \eqref{priorcon} in Lemma \ref{lmgron}. Hence it yields that
\begin{equation}
\sup\limits_{t\in[0,T]} e(t)\leq N^{-\delta} 
\end{equation}
under the event $\mathcal{C}\cap\bigcap\limits_{n=0}^{M-1}\B_{t_n}\cap\mathcal{H}$. Then we arrive at that
\begin{align*}
\PP\left(\sup\limits_{t\in[0,T]} \nel X_t-Y_t\ner_\infty\geq N^{-\delta} \right)&\leq  \sum\limits_{n=0}^{M-1}\PP(\B_{t_n}^c)+\PP(\mathcal{H}^c)+\PP(\mC^c)\notag\\
&\leq  TN^{\frac{\beta}{d}-\alpha}+C_BN^{\frac{2+\beta}{2d}}\exp(-C_BN^{\frac{\beta-2}{d}})+N^{-\alpha}\leq N^{-\alpha'},
\end{align*}
by using Proposition \ref{propconsis}, Lemma \ref{lmB} and Lemma \ref{lmlarge}. Since $\alpha>0$ is arbitrary and so is $\alpha'$, we have proved Theorem \ref{thmmean}.
\end{proof}

\subsection{The error estimate on interaction}
Using Theorem \ref{thmmean}, we obtain the error estimate on interaction:
\begin{thm}\label{thminteract}
	Under the same assumption as Theorem \ref{thmmean}, let $\rho(x,t)$ be the regular solution to the aggregation equation \eqref{RKS} up to time $T$ such that $\rho\in L^\infty(0,T;L^1\cap L^\infty(\RR^d))$. Assume that $\{X_i^{t}\}_{i=1}^N$ satisfy the particle system \eqref{particl system} and $F^N$ satisfies \eqref{kernel}. Then for any $\alpha>0$, there exists some constants $C_{4,\alpha}>0$ depending only on $\alpha$, $T$ and $C_{\rho_0}$ such that the following estimate holds with the cut-off index $0<\delta<\frac{1}{3}$
	\begin{align*}
\PP\bigg(&\sup\limits_{t\in[0,T]}\sup\limits_{i=1,\cdots,N}\left|\int_{\RR^d}F^N(X_i^t-y)\rho(y,t)dy-\frac{1}{N-1}\sum_{j\neq i}^{N}F^N(X_i^t-X_j^{t})\right|\notag\\
&\quad \quad \leq C_{4,\alpha}  \nu^{\frac{1}{2}}N^{-\delta}\log(N)\bigg) \geq 1-N^{-\alpha},
\end{align*}
\end{thm}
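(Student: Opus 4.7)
The plan is to reduce the empirical interaction error to quantities controlled by the mean-field limit (Theorem \ref{thmmean}) and the consistency bounds of Proposition \ref{propconsis}. Concretely, I introduce the mean-field trajectory $Y_t=(Y_1^t,\ldots,Y_N^t)$ driven by \eqref{meandynamics} with $Y_0=X_0$, and, using the notation $\mathcal F^N$ and $\overline{\mathcal F}^N$ from \eqref{FN}, \eqref{barFN}, I perform the triangle-inequality splitting
\begin{equation*}
\mathcal F^N(X_t)-\overline{\mathcal F}^N(X_t)
=\bigl[\mathcal F^N(X_t)-\mathcal F^N(Y_t)\bigr]
+\bigl[\mathcal F^N(Y_t)-\overline{\mathcal F}^N(Y_t)\bigr]
+\bigl[\overline{\mathcal F}^N(Y_t)-\overline{\mathcal F}^N(X_t)\bigr].
\end{equation*}
Each of the three pieces will be shown, on a high-probability event, to be of size $C_\alpha \nu^{1/2} N^{-\delta}\log(N)$.

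For the middle term I simply invoke the uniform-in-time consistency estimate \eqref{consistency} of Proposition \ref{propconsis}. Since $\delta\leq 1/d$, the exponent satisfies $\tfrac{\delta(d-2)-1}{2}\leq -\delta$, so this term is controlled by $C_{2,\alpha}\nu^{1/2} N^{-\delta}\log(N)$ except on a set of probability $N^{-\alpha}$. The third term is a convolution difference: $\overline{\mathcal F}^N$ has Lipschitz constant $\|\nabla F^N\ast\rho\|_\infty\leq C\log(N)$ by Lemma \ref{converse}, so on the event of Theorem \ref{thmmean} where $\sup_{t\in[0,T]}\|X_t-Y_t\|_\infty\leq N^{-\delta}$, this term is bounded by $C\log(N)\cdot N^{-\delta}$, which is absorbed into $C\nu^{1/2} N^{-\delta}\log(N)$.

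The main work is the first term. On the event $\mathcal A$ of \eqref{eventA} (produced by Theorem \ref{thmmean}) one has $\|X_t-Y_t\|_\infty < N^{-\delta}\leq 2N^{-\delta}$, so the local Lipschitz estimate Lemma \ref{lmlip} applies and gives
\begin{equation*}
\|\mathcal F^N(X_t)-\mathcal F^N(Y_t)\|_\infty
\leq C\,\|\mathcal L^N(Y_t)\|_\infty\,\|X_t-Y_t\|_\infty.
\end{equation*}
Here the key observation is that, by the uniform-in-time $L^N$-consistency \eqref{consistency1} of Proposition \ref{propconsis} together with Lemma \ref{converse}, one has $\|\mathcal L^N(Y_t)\|_\infty\leq \|\overline{\mathcal L}^N(Y_t)\|_\infty+C_{2,\alpha}\nu^{1/2}N^{(d\delta-1)/2}\log(N)\leq C_\alpha\log(N)$ uniformly for $t\in[0,T]$ (recall $\delta<1/d$). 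Combining with $\|X_t-Y_t\|_\infty\leq N^{-\delta}$ yields the required $C_\alpha N^{-\delta}\log(N)$ bound on the first term.

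The proof is then completed by intersecting the events on which Theorem \ref{thmmean}, \eqref{consistency}, and \eqref{consistency1} hold, and absorbing constants. The step I expect to require the most care is propagating the pointwise bound $\|\mathcal L^N(Y_t)\|_\infty\leq C\log(N)$ uniformly over $t\in[0,T]$ (rather than at discrete times $t_n$); this however is exactly what Proposition \ref{propconsis} was designed to handle via the interpolation argument using Lemma \ref{lmB}. Once that uniform bound is in hand, the three-piece decomposition above and the elementary choice $\delta<1/d$ together yield the claimed estimate with probability at least $1-N^{-\alpha}$ for $N$ sufficiently large.
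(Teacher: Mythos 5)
Your proposal is correct, but it follows a genuinely different decomposition from the paper's. You split $\mathcal F^N(X_t)-\overline{\mathcal F}^N(X_t)$ into three pieces through the mean-field configuration $Y_t$ (empirical-vs-empirical, empirical-vs-field at $Y$, field-at-$Y$-vs-field-at-$X$), and then you only need machinery already established: Theorem \ref{thmmean} for $\|X_t-Y_t\|_\infty\le N^{-\delta}$, Lemma \ref{lmlip} together with the uniform bound $\|\mathcal L^N(Y_t)\|_\infty\le C\log(N)$ coming from \eqref{consistency1}, the uniform consistency \eqref{consistency} for the middle piece, and $\|\nabla F^N\ast\rho\|_\infty\le C\log(N)$ from Lemma \ref{converse} for the last piece; the exponent bookkeeping $\tfrac{\delta(d-2)-1}{2}\le-\delta$ for $\delta\le\tfrac1d$ is right. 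The paper instead inserts the single intermediate quantity $\tfrac{1}{N-1}\sum_{j}F^N(X_1^t-Y_j^t)$, which forces it to prove fresh law-of-large-numbers estimates for $F^N$ and $L^N$ centered at the random point $X_1^t$ (asserted to be ``similar'' to Lemma \ref{lmlarge}, although $X_1^t$ is not independent of the $Y_j^t$), plus the pointwise bound $|F^N(x+\xi)-F^N(x)|\le CL^N(x)|\xi|$ cited from the literature. Your route avoids that random-center/dependence subtlety entirely and delivers the supremum over $i$ directly in the $\|\cdot\|_\infty$ norm, at the price of leaning on Proposition \ref{propconsis} twice; the paper's route keeps the argument local to the observed particle and closer to the structure of its mean-field proof, but is otherwise no shorter. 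One cosmetic caveat, which your write-up shares with the paper: the contributions bounded by $C\log(N)N^{-\delta}$ (your first and third pieces, the paper's $e_{12}^t$) carry no intrinsic factor $\nu^{1/2}$, so writing the final bound as $C_{4,\alpha}\nu^{1/2}N^{-\delta}\log(N)$ with $C_{4,\alpha}$ independent of $\nu$ is only legitimate under the same implicit convention the paper uses; this does not affect the substance of your argument.
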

\begin{proof}
	For $i=1$, let us denote
	\begin{align*}
   e_1^t:=\left|\int_{\RR^d}F^N(X_1^t-y)\rho(y,t)dy-\frac{1}{N-1}\sum_{j=2}^{N}F^N(X_1^t-X_j^{t})\right|,
	\end{align*}
	then one splits it into two parts:
\begin{align*}
e_1^t&\leq \left|\int_{\RR^d}F^N(X_1^t-y)\rho(y,t)dy-\frac{1}{N-1}\sum_{j=2}^{N}F^N(X_1^t-Y_j^{t})\right|\notag \\
&\quad+ \left|\frac{1}{N-1}\sum_{j=2}^{N}F^N(X_1^t-Y_j^{t})-\frac{1}{N-1}\sum_{j=2}^{N}F^N(X_1^t-X_j^{t})\right|\notag\\
&=: e_{11}^t+e_{12}^t,
\end{align*}
where $Y_t=(Y_i^t)_{i=1,\cdots,N}$ satisfies  \eqref{meandynamics}.

To estimate $e_{11}^t$, we use the law of large number estimates. In particular, similar to the estimate \eqref{largef} in Lemma \ref{lmlarge}, we can prove that  at any fix time $t\in[0,T]$
\begin{equation}\label{fixtime}
\PP\left(\left|\int_{\RR^d}F^N(X_1^t-y)\rho(y,t)dy-\frac{1}{N-1}\sum_{j=2}^{N}F^N(X_1^t-Y_j^{t})\right|\geq CN^{\frac{\delta(d-2)-1}{2}}\log (N)\right)\leq N^{-\alpha},
\end{equation}
where $C$ depends only on $\alpha$, $T$ and $C_{\rho_0}$.  Then following the procedure in Proposition \ref{propconsis}, we can get the estimate of $e_{11}^t$ for all the time $t\in[0,T]$. Hence one has
\begin{equation}\label{e11estimate}
\PP\left(\sup\limits_{t\in[0,T]} e_{11}^t\geq C \nu^{\frac{1}{2}}N^{\frac{\delta(d-2)-1}{2}}\log (N)\right)\leq N^{-\alpha},
\end{equation}
where $C$ depends only on  $\alpha$, $T$ and $C_{\rho_0}$. 

To estimate $e_{12}^t$, we shall use the result from Theorem \ref{thmmean}.  Let us recall the event
\begin{equation}
\mathcal{A}=\left\{\sup\limits_{t\in[0,T]}\nel X_t-Y_t\ner_\infty\leq  N^{-\delta}\right\},
\end{equation}
then it follows from Theorem \ref{thmmean} that
\begin{equation}
\PP(\mathcal{A}^c)\leq N^{-\alpha}.
\end{equation}
For any $\xi\in\RR^d$ with $|\xi|<4N^{-\delta}$, it follows from \cite[Lemma 6.3]{lazarovici2015mean} that
\begin{equation}
|F^N(x+\xi)-F^N(x)|\leq CL^N(x)|\xi|,
\end{equation}
where $L^N$ is defined in \eqref{lN}.  Therefore, it holds
\begin{align}\label{e12}
&\frac{1}{N-1}\sum_{j=2}^{N}\left|F^N(X_1^t-Y_j^{t})-F^N(X_1^t-X_j^{t})\right|\notag\\
\leq & \frac{1}{N-1}\sum_{j=2}^{N} CL^N(X_1^t-Y_j^{t})|X_j^{t}-Y_j^{t}|\leq C\frac{1}{N-1}\sum_{j=2}^{N} L^N(X_1^t-Y_j^{t})\nel X_t-Y_t\ner_\infty\notag\\
\leq &CN^{-\delta}\frac{1}{N-1}\sum_{j=2}^{N} L^N(X_1^t-Y_j^{t}),
\end{align}
under the event $\A$. 
Next we denote the event
\begin{equation*}
\B_1:=\left\{\sup\limits_{t\in[0,T]}\left|\frac{1}{N-1}\sum_{j=2}^{N} L^N(X_1^t-Y_j^{t})-\int_{\RR^d}L^N(X_1^t-y)\rho(y,t)dy\right|\leq C  \nu^{\frac{1}{2}} N^{\frac{d\delta-1}{2}}\log (N)\right\},
\end{equation*}
Similar to the law of large numbers estimate \eqref{largel} in Lemma \ref{lmlarge}, we can prove that
\begin{equation}
\PP\left(\B_1^c\right)\leq  N^{-\alpha}.
\end{equation}
 Hence it follows from \eqref{e12} and Lemma \ref{converse} that
\begin{align*}
\sup\limits_{t\in[0,T]}e_{12}^t&\leq C\left(\left|\int_{\RR^d}L^N(X_1^t-y)\rho(y,t)dy\right|+C \nu^{\frac{1}{2}}N^{\frac{d\delta-1}{2}}\log (N)\right) N^{-\delta}\notag\\
&\leq C \nu^{\frac{1}{2}}\log(N)N^{-\delta},
\end{align*}
under the event $\A\cap \B_1$,
which implies that
\begin{equation}\label{e12estimate}
\PP\left(\sup\limits_{t\in[0,T]}e_{12}^t\leq C \nu^{\frac{1}{2}}\log(N)N^{-\delta} \right)\geq 1-N^{-\alpha},
\end{equation}
where $C$ depends only on $\nu$, $\alpha$, $T$ and $C_{\rho_0}$.
 
 Collecting estimates \eqref{e11estimate} and \eqref{e12estimate}, it yields that
\begin{equation}\label{e2}
\PP\left(\sup\limits_{t\in[0,T]}e_{1}^t\leq C \nu^{\frac{1}{2}}N^{-\delta}\log(N) \right)\geq 1-N^{-\alpha}.
\end{equation}
where $C$ depends only on $\alpha$, $T$ and $C_{\rho_0}$. Similarly, we can arrive at the same estimate for $i=2,\cdots,N$, which  finishes the proof.
\end{proof}

\section{Parameter estimation and the proof Theorem \ref{mainthm}}
In this section, we obtain the diffusion parameter estimation and prove our main Theorem \ref{mainthm}.

Let us recall \eqref{split} that
\begin{equation}
|\hat{\nu}-\nu|\leq  C \nu^{\frac{1}{2}}(|\mathcal{I}_2|^{\frac{1}{2}}+|\mathcal{I}_3|^{\frac{1}{2}})+|\nu_{K,N}-\nu|\,,
\end{equation}
where
\begin{equation}\label{es1}
\nu_{K,N}:=\frac{1}{2dKT} \sum_{i=1}^K\sum_{n=0}^{M-1} \left| X_{i}^{(n+1)}-X_{i}^{(n)}-\int_{t_n}^{t_{n+1}}\frac{1}{N-1}\sum_{j\neq i}^{N}F^N\big(X_{i}^s-X_{j}^s\big)\,ds\right|^2\,,
\end{equation}
and
\begin{equation}
|\mathcal{I}_2|=\frac{1}{dKT} \sum_{i=1}^K\sum_{n=0}^{M-1} \left| \int_{t_n}^{t_{n+1}}\left(\frac{1}{N-1}\sum_{j\neq i}^{N}F^N\big(X_{i}^s-X_{j}^s\big) -\int_{\mathbb{R}^d}F^N(X_i^{s}-y)\rho(y,s)dy\right)ds\right|^2,
\end{equation}
and
\begin{equation}
|\mathcal{I}_3|=\frac{1}{dKT} \sum_{i=1}^K\sum_{n=0}^{M-1}\left|\int_{t_n}^{t_{n+1}}\int_{\mathbb{R}^d}F^N(X_i^{s}-y)\rho(y,s)\,dyds\right|^2.
\end{equation}
According to Lemma \ref{converse}, one has
\begin{equation}\label{I3}
|\mathcal{I}_3|\leq C\Delta t,
\end{equation}
where $C$ depends only on $T$ and $C_{\rho_0}$. Then it follows from Theorem \ref{thminteract} that
\begin{equation}\label{I2}
\PP\left(|\mathcal{I}_2|\leq C\nu\Delta t N^{-2\delta}\log^2(N)\right)\geq 1-N^{-\alpha},
\end{equation}
where $C$ depends only on $\alpha$, $T$ and $C_{\rho_0}$.
It is left to estimate the error between $\nu_{K,N}$ and $\nu$, which can be done by using the concentration property of $\chi^2$ random variable.
\begin{thm}\label{thm3}
	Under the assumption as in Theorem \ref{mainthm}. Suppose that $\nu_{K,N}$ satisfies \eqref{es1}, then the following estimate holds
	\begin{equation}\label{thm3eq}
	\PP\left(|\nu_{K,N}-\nu|>\gamma \nu\right)\leq 2e^{-\frac{dKM\gamma^2}{8}},\quad \text{for any }\gamma\in(0,1).
	\end{equation}
\end{thm}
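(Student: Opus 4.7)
The plan is to observe that $\nu_{K,N}$ has been engineered so that every summand inside the square collapses to a scaled standard Gaussian; the entire estimate then reduces to a classical concentration bound for a chi-squared distribution.

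First I would use the defining SDE \eqref{particl system} to identify the bracketed residual in \eqref{es1} with a Brownian increment. Concretely, for every $i\in\{1,\dots,K\}$ and $n\in\{0,\dots,M-1\}$,
$$X_i^{(n+1)} - X_i^{(n)} - \int_{t_n}^{t_{n+1}}\frac{1}{N-1}\sum_{j\neq i}^N F^N\!\big(X_i^s - X_j^s\big)\,ds = \sqrt{2\nu}\,\bigl(B_i^{t_{n+1}} - B_i^{t_n}\bigr) = \sqrt{2\nu\Delta t}\,\mathcal{N}_i^{(n)},$$
with $\mathcal{N}_i^{(n)}\sim\mathcal{N}(0,I_d)$. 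Since the Brownian motions $\{B_i\}_{i=1}^K$ are independent and disjoint increments of a Brownian motion are independent, the whole family $\{\mathcal{N}_i^{(n)}\}_{i,n}$ is i.i.d.\ standard Gaussian in $\RR^d$.

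Substituting this identity into \eqref{es1} and using $M\Delta t = T$, the interaction terms cancel exactly and one gets
$$\nu_{K,N} = \frac{2\nu\Delta t}{2dKT}\sum_{i=1}^K\sum_{n=0}^{M-1}\bigl|\mathcal{N}_i^{(n)}\bigr|^2 = \frac{\nu}{dKM}\,W,\qquad W := \sum_{i=1}^K\sum_{n=0}^{M-1}\bigl|\mathcal{N}_i^{(n)}\bigr|^2\sim\chi^2(dKM),$$
because $W$ is a sum of $dKM$ independent $\chi^2(1)$ variables. In particular $\EE[\nu_{K,N}] = \nu$, and \eqref{thm3eq} is equivalent to $\PP\!\bigl(|W - dKM|>\gamma\,dKM\bigr)\leq 2e^{-dKM\gamma^2/8}$.

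It remains to apply the standard Chernoff bound for the chi-squared MGF $\EE[e^{tW}] = (1-2t)^{-dKM/2}$ valid for $t<1/2$. Optimising $t = \gamma/(2(1+\gamma))$ on the upper tail gives
$$\PP\bigl(W\geq(1+\gamma)dKM\bigr) \leq \exp\!\Bigl(-\tfrac{dKM}{2}\bigl(\gamma - \log(1+\gamma)\bigr)\Bigr) \leq e^{-dKM\gamma^2/8},$$
where the last inequality uses $\gamma - \log(1+\gamma) = \int_0^\gamma \frac{t}{1+t}\,dt \geq \gamma^2/4$ for $\gamma\in(0,1)$. The lower tail $\PP(W\leq(1-\gamma)dKM)\leq e^{-dKM\gamma^2/4}$ follows by the same Chernoff computation with $t=\gamma/(2(1-\gamma))$ and the bound $\gamma+\log(1-\gamma)\leq -\gamma^2/2$; summing the two tails yields the required $2e^{-dKM\gamma^2/8}$. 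There is no real obstacle here: once the telescoping in the first step is observed, the rest is a textbook chi-squared tail bound applied with $n = dKM$ degrees of freedom.
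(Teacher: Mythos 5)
Your proposal is correct and follows essentially the same route as the paper: identify the residual in \eqref{es1} with the Brownian increments $\sqrt{2\nu\Delta t}\,\mathcal{N}_i^{(n)}$, note that the resulting normalized sum is $\chi^2(dKM)$, and conclude by a two-sided chi-squared tail bound. The only difference is cosmetic: the paper cites a Bernstein-type inequality (Corollary 2.11 of the cited reference) for $\PP\bigl(|S/(dKM)-1|>\gamma\bigr)\leq 2e^{-dKM\gamma^2/8}$, whereas you derive the same bound directly via the Chernoff/MGF computation, and your elementary estimates $\gamma-\log(1+\gamma)\geq\gamma^2/4$ and $\gamma+\log(1-\gamma)\leq-\gamma^2/2$ for $\gamma\in(0,1)$ are valid.
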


\begin{proof}
	Recall that
	\begin{equation}
	X_{i}^{(n+1)}=X_{i}^{(n)}+\int_{t_n}^{t_{n+1}}\frac{1}{N-1}\sum_{j\neq i}^{N}F^N\big(X_{i}^s-X_{j}^s\big)ds+\sqrt{2\nu\Delta t}\,\mathcal{N}_i^{(n)},\quad i=1,\cdots,K,
	\end{equation}
	then we know
	\begin{equation}
	\frac{X_{i}^{(n+1)}-X_{i}^{(n)}-\int_{t_n}^{t_{n+1}}\frac{1}{N-1}\sum_{j\neq i}^{N}F^N\big(X_{i}^s-X_{j}^s\big)ds}{\sqrt{2\nu\Delta t}} \sim \mathcal{N}(0, 1)^d.
	\end{equation}
	
	Notice that the random variable $$S:=\frac{1}{2\nu \Delta t} \sum_{i=1}^K\sum_{n=0}^{M-1} \left| X_{i}^{(n+1)}-X_{i}^{(n)}-\int_{t_n}^{t_{n+1}}\frac{1}{N-1}\sum_{j\neq i}^{N}F^N\big(X_{i}^s-X_{j}^s\big)ds\right|^2$$ is distributed according to the chi-squared distribution with $dNM$ degrees of freedom. This is usually denoted as
	\begin{equation}
	S~\sim~\chi^2(dKM).
	\end{equation}
	Recall a simple fact from probability theory, we know $\EE[S]=dKM$ and
	\begin{equation}
	\mbox{Var}[S]=\EE\left[(S-dKM)^2\right]=2dKM.
	\end{equation}
	Recall that the estimate of $\nu$ is given by
	\begin{equation}
	\nu_{K,N}=\frac{1}{2dKT} \sum_{i=1}^K\sum_{n=0}^{M-1} \left| X_{i}^{(n+1)}-X_{i}^{(n)}-\int_{t_n}^{t_{n+1}}\frac{1}{N-1}\sum_{j\neq i}^{N}F^N\big(X_{i}^s-X_{j}^s\big)ds\right|^2,
	\end{equation}
	which leads to
	\begin{equation}
	\EE\left[\left(\frac{\nu_{K,N}}{\nu}-1\right)^2\right]=\frac{2}{dKM}.
	\end{equation}
	Hence we have
	\begin{equation}
	\EE\left[(\nu_{K,N}-\nu)^2\right]=\frac{2\nu^2}{dKM}.
	\end{equation}
	
	Also by the concentration of $\chi^2$ variable, we have the following two sided tail bound
	\begin{equation}
	\PP\left(\left|\frac{S}{dKM}-1\right|>\gamma\right)\leq 2e^{-\frac{dKM\gamma^2}{8}},\quad \text{for any }\gamma\in(0,1),
	\end{equation}
	which is a direct result from the Bernstein's inequality as the form showed in \cite[Corollary 2.11]{BSLGM}. And it leads to
	\begin{equation}\label{I1}
	\PP\left(|\nu_{K,N}-\nu|>\gamma \nu\right)\leq 2e^{-\frac{dKM\gamma^2}{8}},\quad \text{for any }\gamma\in(0,1).
	\end{equation}
	Hence it concludes the proof.
\end{proof}

Collecting estimates \eqref{I1}, \eqref{I2} and \eqref{I3}, one has
\begin{equation}
\PP\left(|\wh{\nu}-\nu|\leq C\nu^{\frac{1}{2}}\Delta t^\frac{1}{2}  (1+ \nu^{\frac{1}{2}}N^{-\delta}\log(N))+\gamma \nu\right)\geq 1-N^{-\alpha}-2e^{-\frac{dKM\gamma^2}{8}},
\end{equation}
for any $\gamma\in(0,1)$. Hence Theorem \ref{mainthm} has been proved.

\section{Extension to regular interacting kernel $F\in W^{1,\infty}(\RR^d)$}
In this section, we will extend our result to the particle system with regular interacting force $F$, which satisfies
\begin{equation}\label{regularforce}
F\in W^{1,\infty} (\mathbb{R}^d).
\end{equation}
Since $F$ is non-singular, there is no need to mollify the force $F$ anymore. To be more specific, we consider trajectories $\{X_i^t\}_{i=1}^N$ satisfying SDEs:
\begin{equation}\label{random particle}
dX_{i}^t=\frac{1}{N-1}\sum_{j\neq i}^{N}F\big(X_{i}^t-X_{j}^t\big)\,dt+\sqrt{2\nu}\,dB_i^t,\quad i=1,\cdots,N,
\end{equation}
where the initial data $\{X_i^0\}_{i=1}^{N}$ are i.i.d. sharing the common density $\rho_0\in L^1\cap L^\infty(\mathbb{R}^d)$.  Then the solution $\rho$ to the mean field equation:
\begin{subequations}\label{KS2}
	\begin{align}
	&\partial_t\rho=\nu\Delta\rho-\nabla\cdot(\rho F\ast \rho),\quad x\in\mathbb{R}^d,~t>0,\\
	&\rho(x,0)=\rho_0(x), 
	\end{align}
\end{subequations}
has the following regularity for any $T>0$
\begin{equation}
\|\rho\|_{L^\infty\left(0,T; L^1\cap L^\infty (\mathbb{R}^d)\right)}\leq C\left(T,\|\rho_0\|_{L^1\cap L^\infty(\mathbb{R}^d)},\norm{F}_{W^{1,\infty}(\mathbb{R}^d)}\right)=:C_{F,\rho_0}.
\end{equation}

Take a time step $\Delta t>0$ and let $t_n:=n\Delta t$
and $M:=\frac{T}{\Delta t}$ (we assume that $\frac{T}{\Delta t}$ is an
integer). Denote $X_i^{(n)}:=X_i^{t_n}=X_i^{n\Delta t}$ as the solution to \eqref{random particle} at time $t_n$. Namely, one has
\begin{align}
X_i^{(n+1)}-X_i^{(n)}= \int_{t_n}^{t_{n+1}}\frac{1}{N-1}\sum_{j\neq i}^{N}F\big(X_{i}^s-X_{j}^s\big)\,ds+\sqrt{2\nu \Delta t}\mathcal{N}_i^{(n)},
\end{align}
where $\mathcal{N}_i^{(n)}\sim \mathcal{N}(0,1)^d$, i.e.  the standard Gaussian distribution in dimension $d$.

Then 
we are ready to define our estimator for the diffusion parameter as before
\begin{equation}\label{estimator1'}
\widehat\nu:=\frac{1}{2dKT} \sum_{i=1}^K\sum_{n=0}^{M-1} \left | X_{i}^{(n+1)}-X_{i}^{(n)}\right|^2,
\end{equation}
where $1\ll K \ll N$, which means we only have partial observations.

The extended result can be described in the following theorem.
\begin{thm}\label{mainthm'}
	Suppose that $F(x)\in W^{1,\infty}(\RR^d)$ and $0\leq\rho_0(x)\in L^1\cap L^\infty(\mathbb{R}^d)$. For any $T>0$, take a time step $\Delta t>0$ and define $t_n:=n\Delta t$ 
	and $M:=\frac{T}{\Delta t}$. Let $\{X_i^{(n)}\}_{i=1,n=0}^{K,M}$ be the sample trajectories satisfying \eqref{random particle} at time $t_n$. Then  there exists some constant $N_0>0$  depending only on $\nu$, $\alpha$, $T$, $\norm{F}_{W^{1,\infty}(\RR^d)}$ and $\|\rho_0\|_{L^1\cap L^\infty(\mathbb{R}^d)}$, such that for $N\geq N_0$,  the estimator $\widehat{\nu}$ defined in \eqref{estimator1'} is an approximation of $\nu$, and the following estimate holds
	\begin{align}\label{maineq'}
	\PP\left(|\widehat{\nu}-\nu|\leq C_\alpha \nu^{\frac{1}{2}}\Delta t^{\frac{1}{2}} (1+\nu^{\frac{1}{2}}N^{-\frac{1}{2}}\log(N))+\nu \gamma\right)
\geq1-N^{-\alpha}-2e^{-\frac{dKM\gamma^2}{8}}, 
	\end{align}
	for any $\gamma\in(0,1)$, where $C_\alpha>0$  depends only on $\alpha$, $T$, $\norm{F}_{W^{1,\infty}(\RR^d)}$ and $\|\rho_0\|_{L^1\cap L^\infty(\mathbb{R}^d)}$. 
	In particular, let $N$ goes to infinity and choose $\Delta t^{\frac{1}{2}}=\gamma$, it follows from \eqref{maineq'} that
\begin{align}\label{mianeq1'}
	&\PP\left(|\widehat{\nu}-\nu|\leq C_\alpha(\nu^{\frac{1}{2}}+\nu) \Delta t^{\frac{1}{2}}\right)
	\geq1-2e^{-\frac{dKT}{8}}.
	\end{align}
\end{thm}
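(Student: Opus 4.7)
The plan is to mirror the proof of Theorem \ref{mainthm} step by step, with substantial simplifications coming from the fact that $F \in W^{1,\infty}(\RR^d)$ is globally Lipschitz, so no regularization or cutoff function is required. First I would introduce the intermediate estimator
\begin{equation*}
\nu_{K,N} := \frac{1}{2dKT}\sum_{i=1}^K\sum_{n=0}^{M-1}\left\lvert X_i^{(n+1)} - X_i^{(n)} - \int_{t_n}^{t_{n+1}}\frac{1}{N-1}\sum_{j\neq i}^N F(X_i^s - X_j^s)\,ds\right\rvert^2,
\end{equation*}
and carry out the same algebraic manipulation that produced \eqref{split}, obtaining
\begin{equation*}
|\wh\nu - \nu| \leq C\nu^{1/2}\bigl(|\mathcal{I}_2|^{1/2} + |\mathcal{I}_3|^{1/2}\bigr) + |\nu_{K,N} - \nu|,
\end{equation*}
where $\mathcal{I}_2, \mathcal{I}_3$ are defined exactly as before but with $F^N$ replaced by $F$.

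The bound on $|\mathcal{I}_3|$ is immediate from $\|F \ast \rho\|_\infty \leq \|F\|_\infty \|\rho\|_1$, giving $|\mathcal{I}_3| \leq C\Delta t$ with $C$ depending on $\|F\|_{W^{1,\infty}(\RR^d)}$ and $C_{F,\rho_0}$. The bound on $|\nu_{K,N} - \nu|$ is literally Theorem \ref{thm3}, whose proof only uses that the normalized increments $(X_i^{(n+1)} - X_i^{(n)} - \int b_i^n)/\sqrt{2\nu\Delta t}$ are i.i.d. standard Gaussians in $\RR^d$, independently of whether $F$ is singular or smooth. Thus only an interaction estimate analogous to Theorem \ref{thminteract} remains.

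For this interaction estimate I would redo the law-of-large-numbers step of Lemma \ref{lmlarge} with $F$ in place of $F^N$: conditionally on $Y_1^t$, the variables $Z_j := F(Y_1^t - Y_j^t) - \int F(Y_1^t - y)\rho(y,t)dy$ are centred, uniformly bounded by $2\|F\|_\infty$, and have variance bounded by $\|F\|_\infty^2$, so Lemma \ref{central} with $g(N) = C$ delivers a fixed-time rate of order $N^{-1/2}\log N$. To promote this to a uniform-in-time estimate I would repeat the discretization of Proposition \ref{propconsis}, using Lemma \ref{lmB} to control Brownian increments. The crucial simplification is that the local Lipschitz bound of Lemma \ref{lmlip} is replaced by the clean global bound $\|\mathcal{F}(X_t) - \mathcal{F}(Y_t)\|_\infty \leq \|\nabla F\|_\infty \|X_t - Y_t\|_\infty$, so the events $\B_{t_n}$ and the $L^N$ machinery disappear entirely. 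A direct Gronwall argument (no bootstrapping of the sort required in Lemma \ref{lmgron}) in the spirit of Theorem \ref{thmmean} then yields $\sup_{t\in[0,T]}\|X_t - Y_t\|_\infty \leq C\nu^{1/2} N^{-1/2}\log N$ with probability at least $1 - N^{-\alpha}$, and combining this with the pointwise law-of-large-numbers bound gives
\begin{equation*}
\PP\bigl(|\mathcal{I}_2| \leq C\nu\,\Delta t\, N^{-1}\log^2 N\bigr) \geq 1 - N^{-\alpha}.
\end{equation*}

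Substituting these three ingredients into the error decomposition and applying a union bound produces \eqref{maineq'}; the corollary \eqref{mianeq1'} then follows by sending $N \to \infty$ and choosing $\Delta t^{1/2} = \gamma$, exactly as in the remark after Theorem \ref{mainthm}. I do not foresee any real obstacle here: the Newtonian singularity was solely responsible for all of the delicate machinery (the $L^N$ cutoff, the regularization $F^N$, the bootstrap Lemma \ref{lmgron}, the log factors from $L^N \ast \rho$), and in the globally Lipschitz case every one of those devices collapses to its classical Dobrushin--Braun--Hepp counterpart. The only point requiring minor bookkeeping is to keep the $\nu^{1/2}$ factor tracked correctly through the uniform-in-time extension of the law of large numbers, which comes from applying Lemma \ref{lmB} with the actual diffusion coefficient $\nu$ rather than an absolute constant.
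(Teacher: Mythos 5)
Your proposal is correct and follows essentially the same route as the paper: the same intermediate estimator $\nu_{K,N}$ and error decomposition, the chi-squared concentration bound of Theorem \ref{thm3} verbatim, the bound $|\mathcal{I}_3|\leq C\Delta t$ from boundedness of $F\ast\rho$, and for $|\mathcal{I}_2|$ the same mean-field comparison with \eqref{meandynamics'}: a law-of-large-numbers estimate made uniform in time as in Proposition \ref{propconsis}, followed by a direct Gronwall argument exploiting the global Lipschitz bound on $F$ (no $L^N$ cutoff or bootstrap Lemma \ref{lmgron}), yielding $\sup_{t\in[0,T]}\|X_t-Y_t\|_\infty\leq C\nu^{1/2}N^{-1/2}\log N$ and hence $|\mathcal{I}_2|\leq C\nu\Delta t N^{-1}\log^2 N$ with probability at least $1-N^{-\alpha}$. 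This matches the paper's proof, including the final union bound and the choice $\Delta t^{1/2}=\gamma$ for \eqref{mianeq1'}.
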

\begin{proof} 
	
	Again, we defined a intermediate estimator
	\begin{equation}
\nu_{K,N}:= \frac{1}{2dKT} \sum_{i=1}^K\sum_{n=0}^{M-1} \left \lvert X_{i}^{(n+1)}-X_{i}^{(n)}- \int_{t_n}^{t_{n+1}}\frac{1}{N-1}\sum_{j\neq i}^{N}F\big(X_{i}^s-X_{j}^s\big)\,ds\right \rvert^2 
	\end{equation}
	then we split the error into two parts:
	\begin{align}\label{split1'}
	|\hat{\nu}-\nu|\leq |\hat{\nu}-\nu_{K,N}|+|\nu_{K,N}-\nu|.
	\end{align}
	and we can prove that there exists a positive number $C$ such that
	\begin{equation}\label{split'}
	|\widehat{\nu}-\nu|\leq C \nu^{\frac{1}{2}}(|\mathcal{I}_2|^{\frac{1}{2}}+|\mathcal{I}_3|^{\frac{1}{2}})+|\nu_{K,N}-\nu|.
	\end{equation}
	with
	\begin{equation}
	|\mathcal{I}_2|:=\frac{1}{dKT}\sum_{i=1}^K\sum_{n=0}^{M-1} \left| \int_{t_n}^{t_{n+1}}\left(\frac{1}{N-1}\sum_{j\neq i}^{N}F\big(X_{i}^s-X_{j}^s\big) -\int_{\mathbb{R}^d}F(X_i^{s}-y)\rho(y,s)dy\right)ds\right|^2 ,
	\end{equation}
	and 
	\begin{equation}
	|\mathcal{I}_3|:=\frac{1}{dKT} \sum_{i=1}^K\sum_{n=0}^{M-1}\left|\int_{t_n}^{t_{n+1}}\int_{\mathbb{R}^d}F(X_i^{s}-y)\rho(y,s)dyds\right|^2.
	\end{equation}

	According to Lemma \ref{converse}, one has
	\begin{equation}\label{I3'}
	|\mathcal{I}_3|\leq C\Delta t,
	\end{equation}
	where $C$ depends only on $T$, $\norm{F}_{W^{1,\infty}(\RR^d)}$ and $\|\rho_0\|_{L^1\cap L^\infty(\mathbb{R}^d)}$. It follows from Theorem \ref{thm3} that
	\begin{equation}\label{I1'}
\PP\left(|\nu_{K,N}-\nu|>\gamma \nu\right)\leq 2e^{-\frac{dKM\gamma^2}{8}},\quad \text{for any }\gamma\in(0,1).
\end{equation}
	
	Now it is left to get the estimate of $\mathcal{I}_2$. The main idea behind the proof is also to construct a mean-field dynamic system $\big\{Y_i^{t}\big\}_{i=1}^N$ without interaction:
	\begin{equation}\label{meandynamics'}
	dY_{i}^t=\int_{\RR^d}F\big(Y_i^t-y\big)\rho(y,t)dy\,dt+\sqrt{2\nu}\,dB_i^t,\quad i=1,\cdots,N,
	\end{equation}
	here again we let $\{Y_i^t\}_{i=1}^N$ has the same initial condition as $\{X_i^t\}_{i=1}^N$ (i.i.d. with common density $\rho_0$) .
	Consider the quantity $e(t)$ defined as
	\begin{equation}
	e(t):=\nel X_t-Y_t\ner_\infty.
	\end{equation}
	Following the same procedure as  in Lemma \ref{lmlarge} and Proposition \ref{propconsis}, one can prove  that there exists some $C_{1,\alpha}$ depending only on $\alpha$, $T$, $\norm{F}_{W^{1,\infty}(\RR^d)}$ and $\|\rho_0\|_{L^1\cap L^\infty(\mathbb{R}^d)}$ such that
			\begin{equation}
	\PP\left(\sup\limits_{t\in[0,T]}\nel \mathcal{F}(Y_t)-\overline{ \mathcal{F}}(Y_t)\ner_\infty\geq C_{1,\alpha}\nu^{\frac{1}{2}}N^{-\frac{1}{2}}\log (N)\right)\leq N^{-\alpha},
	\end{equation}
	where 
	\begin{equation}
	(\mathcal{F}(Y_t))_i:=\frac{1}{N-1}\sum_{j\neq i}^{N}F\big(Y_{i}^t-Y_{j}^t\big),
	\end{equation}
	and 
	\begin{equation}
	(\overline{ \mathcal{F}}(Y_t))_i:=\int_{\RR^d}F\big(Y_i^t-y\big)\rho(y,t)dy.
	\end{equation}
  We denote the event 
  	\begin{equation}
  \mathcal{C}:=\left\{\sup\limits_{t\in[0,T]}\nel\mathcal{F}(Y_t)-\overline{ \mathcal{F}}(Y_t)\ner_\infty\leq C_{1,\alpha}\nu^{\frac{1}{2}}N^{-\frac{1}{2}}\log (N)\right\},
  \end{equation}
Then using the fact $\frac{d\|x\|_{\infty}}{dt}\leq \|\frac{dx}{dt}\|_{\infty}$, one concludes that under the event $\mathcal{C}$
	\begin{align}\label{et‘}
	\frac{d e(t)}{dt}&\leq\nel \mathcal{F}(X_t)-\overline{ \mathcal{F}}(Y_t)\ner_\infty \notag \\
	&\leq \nel \mathcal{F}(X_t)- \mathcal{F}(Y_t)\ner_\infty +\nel\mathcal{F}(Y_t)-\overline{ \mathcal{F}}(Y_t)\ner_\infty \notag\\
	&\leq C \|X_t-Y_t\|_\infty+C\nu^{\frac{1}{2}}N^{-\frac{1}{2}}\log (N),
	\end{align}
  which leads to
  \begin{equation}\label{XY}
  \sup\limits_{t\in[0,T]} \nel X_t-Y_t\ner_\infty\leq C\nu^{\frac{1}{2}}N^{-\frac{1}{2}}\log (N),
  \end{equation}
  where $C$ depends only on $\alpha$, $T$, $\norm{F}_{W^{1,\infty}(\RR^d)}$ and $\|\rho_0\|_{L^1\cap L^\infty(\mathbb{R}^d)}$. Based on this mean-field limit result, we can prove error estimate on interaction as in Theorem \ref{thminteract}.

  Let us split the error
  \begin{align*}
  &\left|\int_{\RR^d}F(X_1^t-y)\rho(y,t)dy-\frac{1}{N-1}\sum_{j=2}^{N}F(X_1^t-X_j^{t})\right|\notag\\
  \leq& \left|\int_{\RR^d}F(X_1^t-y)\rho(y,t)dy-\frac{1}{N-1}\sum_{j=2}^{N}F(X_1^t-Y_j^{t})\right|\notag \\
  &+ \left|\frac{1}{N-1}\sum_{j=2}^{N}F(X_1^t-Y_j^{t})-\frac{1}{N-1}\sum_{j=2}^{N}F(X_1^t-X_j^{t})\right|\notag\\
  =:&e_{11}^t+e_{12}^t.
  \end{align*}
  Similar to estimates \eqref{e11estimate} and \eqref{e12estimate},  it is easy to compute that
  \begin{equation}\label{part1}
\PP\left(\sup\limits_{t\in[0,T]}e_{11}^t\leq C\nu^{\frac{1}{2}}N^{-\frac{1}{2}}\log (N) \right)\geq 1-N^{-\alpha},
  \end{equation}
  and
  \begin{equation}\label{part2}
  \PP\left(\sup\limits_{t\in[0,T]}e_{12}^t\leq C \nu^{\frac{1}{2}}N^{-\frac{1}{2}}\log(N) \right)\geq 1-N^{-\alpha}.
  \end{equation}
  where $C$ depends only on $\alpha$, $T$, $\norm{F}_{W^{1,\infty}(\RR^d)}$ and $\|\rho_0\|_{L^1\cap L^\infty(\mathbb{R}^d)}$.
  
  Combining \eqref{part1} and \eqref{part2}, it leads to
  \begin{align*}
  \PP\bigg(\sup\limits_{t\in[0,T]}&\left|\int_{\RR^d}F(X_1^t-y)\rho(y,t)dy-\frac{1}{N-1}\sum_{j=2}^{N}F(X_1^t-X_j^{t})\right|\notag\\
  &\leq C N^{-\frac{1}{2}}\log(N) \bigg)\leq 1-N^{-\alpha},
  \end{align*}
  which yields
  \begin{equation}\label{I2'}
  \PP\left(|\mathcal{I}_2|\leq C\nu^{\frac{1}{2}}\Delta tN^{-1}\log^2(N) \right)\geq 1- N^{-\alpha},
  \end{equation}
  where $C$ depends only on $\alpha$, $T$, $\norm{F}_{W^{1,\infty}(\RR^d)}$ and $\|\rho_0\|_{L^1\cap L^\infty(\mathbb{R}^d)}$.
  
  Collecting \eqref{I1'}, \eqref{I2'} and \eqref{I3'}, we obtain our result
  \begin{align*}
  	\PP\left(|\widehat{\nu}-\nu|\leq C \nu^{\frac{1}{2}} \Delta t^{\frac{1}{2}}(1+\nu^{\frac{1}{2}}N^{-\frac{1}{2}}\log(N))+\nu \gamma\right)
  \geq1-N^{-\alpha}-2e^{-\frac{dKM\gamma^2}{8}},
  \end{align*}
for any $\gamma\in(0,1)$.
\end{proof}

{\bf Acknowledgments:}
We would like to thank Zhenfu Wang for his careful proofreading of the draft and useful suggestions. H.H. is partially supported by NSFC (Grant No. 11771237). The research of J.-G. L. is partially supported by  KI-Net NSF RNMS (Grant No. 1107444) and NSF DMS
(Grant No. 1812573). The work of J.L. is supported in part by the NSF DMS (Grant NO. 1454939).

\bibliography{keller}
\bibliographystyle{abbrv}
\end{document}